\newcounter{dummy} \numberwithin{dummy}{section}
\newtheorem{theorem}[dummy]{Theorem}
\newtheorem{corollary}[dummy]{Corollary}
\newtheorem{lemma}[dummy]{Lemma}
\newtheorem{proposition}[dummy]{Proposition}
\theoremstyle{remark}
\newtheorem{remark}[dummy]{Remark}
\newtheorem{example}[dummy]{Example}
\DeclareMathOperator{\Ann}{Ann}
\DeclareMathOperator{\pr}{pr}
\DeclareMathOperator{\tr}{tr}
\DeclareMathOperator{\dv}{div}
\DeclareMathOperator{\sign}{sign}
\DeclareMathOperator{\spn}{span}
\DeclareMathOperator{\II}{II}
\DeclareMathOperator{\Sec}{Sec}
\DeclareMathOperator{\cha}{char}
\newcommand{\ve}{\varepsilon}
\subjclass{53C17,53D10,53A35}
\keywords{Gauss Bonnet theorem, contact manifolds, sub-Riemannian geometry, characteristic foliation}
\numberwithin{equation}{section}
\title[A sub-Riemannian Gauss-Bonnet theorem]{A sub-Riemannian Gauss-Bonnet theorem for surfaces in contact manifolds}
\author[E.~Grong, J.~Hidalgo and S.~Vega-Molino]{Erlend Grong, Jorge Hidalgo and Sylvie Vega-Molino}
\thanks{The first and third authors are supported by the grant GeoProCo from the Trond Mohn Foundation - Grant TMS2021STG02 (GeoProCo). The second author is supported by the State Research Agency (AEI) via the grant no. PID2020-117868GB-I00, funded by MCIN/AEI/10.13039/ 501100011033/, Spain.}
\address{University of Bergen, Dep. of Mathematics, P.O.~Box 7803, 5020 Bergen, Norway}
\email{erlend.grong@uib.no}
\address{Departamento de Geometría y Topología, Universidad de Granada, 18071, Granada, Spain}
\email{jorgehcal@ugr.es}  
\address{University of Bergen, Dep. of Mathematics, P.O.~Box 7803, 5020 Bergen, Norway}
\email{sylvie.vega-molino@uib.no}
\begin{document}

\begin{abstract}
We obtain a sub-Riemannian version of the classical Gauss-Bonnet theorem. We consider subsurfaces of a three dimensional contact sub-Riemannian manifolds, and using a family of taming Riemannian metric, we obtain a pure sub-Riemannian result in the limit.  In particular, we are able to recover topological information of the surface from the geometry around the characteristic set, i.e., the points where the tangent space to the surface and contact structure coincide. We both give a version for surfaces without boundary and surfaces with boundary.
\end{abstract}

\maketitle

\section{Introduction}
The classical Gauss-Bonnet theorem shows that it is possible to recover purely topological information of a surface from the choice of a smooth structure and Riemannian metric. In this paper, we want to show that such topological information can also be obtained from the restriction of a sub-Riemannian geometric structure, even though the induced metric on the surface will not induce the manifold topology. Consider a three dimensional, connected manifold $M$ with a contact distribution $E$. A smoothly varying inner product $g_E$ defined only on $E$ is called a sub-Riemannian metric. Such a geometric structure induces a distance $d_{g_E}$ on $M$, which, although not Lipschitz equivalent to any Riemannian distance, will induce the manifold topology on $M$ \cite{Bel96,Mon02}. Given an orientation of $E$, there is a unique choice of Reeb vector field $Z$ defined by $E$ and $g_E$. We can then extend the sub-Riemannian metric to a Riemannian metric $g_\ve = \langle \cdot , \cdot \rangle_\ve$ on $M$ with $\|Z\|_\ve = 1/\sqrt{\ve}$ such that the length of all vectors outside of $E$ go to infinity as $\ve \downarrow 0$. Then its Riemannian distance $d_{g_\ve}$ converge to $d_{g_E}$, and this convergence is uniform on compact sets \cite{BGRV19}.

If we restrict ourselves to a subsurface $\Sigma \subseteq M$, then the picture is quite different. If $h_\ve$ is the induced metric on $\Sigma$ from $g_\ve$, then $d_{h_\ve}$ does not converge to a metric compatible with the topology if it converges at all, see \cite{BBC20,BBCh21} for details. Seeing that this limit breaks the topology of $\Sigma$, it is interesting to study the limit of the Gauss-Bonnet formula with respect to $h_\ve$ as $\ve \downarrow 0$. See \cite{BG19,BGVM22} for previous results relating topology to sub-Riemannian invariants. One of our main inspirations \cite{BTV17,BTV20}, in which the authors consider a surface embedded in the Heisenberg group and determine a partial Gauss-Bonnet result. 

We can state our main result for compact surfaces without boundary as follows. Surfaces with boundary are considered in Section~\ref{sec:Boundary}. Assume that both $M$ and~$E$ are orientable, and choose an orientation of $E$. There then exists a unique positively oriented contact form $E = \ker \alpha$, such that $\| d\alpha\|=1$. Let $\Sigma\subseteq M$ be a compact oriented $C^2$-subsurface, with the characteristic set
\begin{align*}
\cha(\Sigma) & = \{ x \in \Sigma \, : \, T_x \Sigma \subseteq E_x \}. 
\end{align*}
This set will be contained in a one-dimensional $C^1$-submanifold of $\Sigma$ as shown in \cite[Lemma~2.4]{BBC20}. Observe that this set does not depend on the metric $g_E$. Let $h$ denote the restriction to $\Sigma$ of $g = g_1$ with area form $\sigma = \sigma^1$, and let $\beta$ be the restriction of the contact form $\alpha$. We then define a function $a: \Sigma \to \mathbb{R}$ by
$$d\beta = - a \sigma,$$
which gives the oriented angle between $\wedge^2 E_x$ and $\wedge^2 T_x \Sigma$. Note that the points $x \in \Sigma$ with $|a(x)| =1$ correspond to the set $\cha(\Sigma)$.  For our result, we need the following assumption to hold. Define $\Phi(c) = \int_{0\leq a \leq c} \sigma$ and $\Phi(-c) = \int_{c \leq a \leq 0} \sigma$ for $0 \leq c \leq 1$.
\begin{equation} \tag{A} \label{A} 
\begin{array}{c}
\text{The function $\Phi(c)$ is differentiable with bounded derivative} \\ \text{ for some interval $(-1,-c_1) \cup (c_1,1)$.} \end{array}
\end{equation}
We will give a sufficient condition for \eqref{A} to hold in Section~\ref{sec:Asufficient}.

On the set $0 < |a(x)| < 1$, it is possible to define an $h$-unit vector field $X$ on $\Sigma$ with values in $T\Sigma \cap E$, giving us a corresponding positively oriented orthonormal basis $X$, $X_2$. Define a measure of the curvature of $\Sigma$ using $a$ and the Reeb vector field $Z$,
\begin{align*}
K_{\Sigma,E} & = -  \left(\dv(X_2)  - \sqrt{1-a^2} \langle [Z,X], X \rangle   \right)X_2 a \\
& \qquad - \left(  1+ \frac{\sqrt{1-a^2}}{a} \dv(X) \right)^2 + 1-a^2 + \frac{\sqrt{1-a^2}}{a}\dv(X),
\end{align*}
where the divergence is with respect to the volume form $\sigma$. The function $K_{\Sigma, E}$ does not depend on the orientation of $E$, and we will show that it is uniformly bounded close to the characteristic set $\cha(\Sigma)$. We will use this function to state our main result.
\begin{theorem}[Sub-Riemannian Gauss-Bonnet theorem] \label{th:main}
Let $\Sigma\subseteq M$ be a compact $C^2$-surface without boundary such that Assumption \eqref{A} holds. Then
$$\left. \frac{d}{dc} \right|_{c=0} \int_{|a| > 1 - c} K_{\Sigma,E} (x) \sigma(x)= 2\pi \chi(\Sigma).$$
\end{theorem}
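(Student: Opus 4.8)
The plan is to reduce the sub-Riemannian statement to the classical Gauss-Bonnet theorem applied to the region $\Sigma_c = \{|a| \le 1-c\}$, and to show that the contribution of the collar $\{1-c \le |a| \le 1\}$ shrinking onto $\cha(\Sigma)$ produces exactly the missing Euler characteristic via boundary terms. Concretely, for a regular value $1-c$ of $|a|$ (which exists for a.e. small $c$ by Sard), $\Sigma_c$ is a compact surface with boundary $\partial \Sigma_c = \{|a| = 1-c\}$, and the classical Gauss-Bonnet theorem gives $\int_{\Sigma_c} K_h\, d\sigma + \int_{\partial \Sigma_c} k_g\, ds = 2\pi \chi(\Sigma_c)$, where $K_h$ is the Gaussian curvature of $h$ and $k_g$ the geodesic curvature of the level curve. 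Since $\chi(\Sigma_c) = \chi(\Sigma) - \chi(\text{collar and caps removed})$, and since $\cha(\Sigma)$ sits inside a $1$-dimensional $C^1$-submanifold, as $c \downarrow 0$ the set $\Sigma \setminus \Sigma_c$ retracts onto $\cha(\Sigma)$; the key topological input is that $\chi(\Sigma_c) \to \chi(\Sigma)$ along a suitable sequence, or more precisely that the $c$-derivative of $\chi(\Sigma_c)$ vanishes and $\chi(\Sigma_0^+) = \chi(\Sigma)$ in the appropriate sense. The first task is therefore to set up this decomposition carefully, handling the possibility that $\{|a|=1-c\}$ has several components and that $\cha(\Sigma)$ need not be a submanifold but only lies in one.

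Next I would identify $K_{\Sigma,E}$ with the relevant part of $K_h$ plus a divergence term. The natural move is to work in the $h$-orthonormal frame $(X, Y)$ on $\{0 < |a| < 1\}$, where $X \in T\Sigma \cap E$ is the field in the statement and $Y = JX$ its rotation by $\pi/2$. One computes $K_h$ in terms of the structure functions of this frame; I expect that a computation using the Koszul formula and the contact normalization (the Reeb field $Z$, the relation $a = \langle \wedge^2 E, \wedge^2 T\Sigma\rangle$, and the fact that $Y$ has a controlled $E$-component of size $\sqrt{1-a^2}$) yields an identity of the shape
$$K_h = K_{\Sigma,E} + \operatorname{div}_h W + (\text{terms that integrate to something computable near } \cha(\Sigma)),$$
for an explicit vector field $W$ on $\{0<|a|<1\}$ built from $X$, $a$, and $Xa$. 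The terms $\frac{(\nabla^h_X X)a}{a}$ and $\frac{(Xa)^2}{\sqrt{1-a^2}}$ are precisely the pieces that survive after discarding a total divergence, so the content here is a bookkeeping of which curvature contributions are "intrinsic to $\Sigma$ seen through $E$" and which are absorbed into the boundary.

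The third step is the boundary/collar analysis, which I expect to be the main obstacle. Applying the divergence theorem on $\Sigma_c$ to the $\operatorname{div}_h W$ term converts it into a flux integral over $\{|a|=1-c\}$, which must be combined with the geodesic-curvature integral $\int_{\partial\Sigma_c} k_g\, ds$ from classical Gauss-Bonnet. One then has to analyze the asymptotics as $c \downarrow 0$: because $h$ degenerates along $\cha(\Sigma)$ and $\sqrt{1-a^2} \to 0$, the individual integrals $\int_{\Sigma_c} K_h\, d\sigma$, $\int_{\partial\Sigma_c} k_g\, ds$, and the flux of $W$ all blow up, but their combination $\int_{|a|\le 1-c} K_{\Sigma,E}\, d\sigma$ must stay finite and its $c$-derivative must converge. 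The delicate point is controlling the singular behavior near $\cha(\Sigma)$ using only $C^2$-regularity of $\Sigma$ and $C^1$-regularity of $\cha(\Sigma)$; one would want a normal-form expansion of $a$ transverse to $\cha(\Sigma)$ (roughly $1 - |a| \sim (\text{dist to } \cha(\Sigma))^2$ on the generic locus, with degenerate points contributing isolated corrections) to evaluate the limiting flux. Differentiating in $c$ via the coarea formula turns $\frac{d}{dc}\int_{|a|\le 1-c} K_{\Sigma,E}\, d\sigma$ into a single integral over the level curve $\{|a| = 1-c\}$, and the claim is that this converges to $-\,(\text{flux terms}) $ whose total equals $2\pi \chi(\Sigma) - \lim \int_{\Sigma_c} K_h\, d\sigma$; matching this against $2\pi \chi(\Sigma_c) - \int_{\partial\Sigma_c}k_g\,ds$ and using $\chi(\Sigma_c)=\chi(\Sigma)$ for small $c$ closes the argument. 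I would treat the generic case (where $\cha(\Sigma)$ is a $C^1$ curve and $da \ne 0$ on it away from finitely many points) first, and then argue that exceptional configurations either do not contribute to the derivative at $c=0$ or can be perturbed away.
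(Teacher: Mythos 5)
Your proposal diverges fundamentally from the paper's argument, and as it stands it has genuine gaps. The paper never works with the single metric $h=h_1$ and an exhaustion by $\Sigma_c=\{|a|\le 1-c\}$; instead it applies the classical Gauss--Bonnet theorem to the closed surface $(\Sigma,h_\ve)$ for \emph{every} $\ve>0$ (so each side is exactly $2\pi\chi(\Sigma)$ with no boundary terms), expands $K^\ve\,d\sigma^\ve$ explicitly in $\ve$ using the connection $\nabla$ and the tensors $J,\tau$, and shows that as $\ve\downarrow 0$ the integral concentrates on the characteristic set: the factor $b_\ve^{-3}=(1+(\ve-1)a^2)^{-3/2}$ acts as an approximate delta at $|a|=1$ after multiplication by $\sqrt{\ve}$, producing precisely $\frac{d}{dc}\big|_{c=0}\int_{|a|>1-c}B_{2,0}\,d\sigma$ with $B_{2,0}=K_{\Sigma,E}$. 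Your route replaces this degenerating family by a fixed-metric divergence identity $K_h=K_{\Sigma,E}+\operatorname{div}_h W+\cdots$, which you only conjecture ``has the right shape''; that identity is the entire analytic content of your approach and nothing in your sketch establishes it or even makes it plausible in the precise form needed for the boundary terms to cancel.

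Two further points would derail the argument even if the divergence identity were granted. First, you assert that $h$ degenerates along $\cha(\Sigma)$ and that $\int_{\Sigma_c}K_h\,d\sigma$ blows up; this is false --- $h=g_1|_{T\Sigma}$ is a genuine Riemannian metric on the compact surface $\Sigma$, $K_h$ is bounded, and $\int_{\Sigma_c}K_h\,d\sigma\to\int_\Sigma K_h\,d\sigma$. The degeneration in this problem lives entirely in the family $h_\ve$ as $\ve\to 0$, which your scheme has discarded. Second, your closing step uses $\chi(\Sigma_c)=\chi(\Sigma)$ for small $c$, which contradicts your own earlier remark and is false whenever $\cha(\Sigma)$ contains isolated points: for the unit sphere in the Heisenberg group (the paper's own example) $\cha(\Sigma)$ is the two poles, so $\chi(\Sigma_c)=0$ while $\chi(\Sigma)=2$. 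The whole content of the theorem is that the missing Euler characteristic $\chi(\Sigma)-\chi(\Sigma_c)$ is recovered from the collar around $\cha(\Sigma)$, i.e.\ exactly the part of your argument that is not carried out. To repair your approach you would need to prove the divergence identity, run the collar asymptotics with the correct (non-degenerate) metric picture, and do the index-theoretic bookkeeping of how the level sets $\{|a|=1-c\}$ turn around the components of $\cha(\Sigma)$; the paper's $\ve$-limit sidesteps all of this.
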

In particular, $\chi(\Sigma)$ can be determined by the values of $K_{\Sigma,E}$ in a neighborhood of $\cha(\Sigma)$. We get the following simple corollary
\begin{corollary}
\begin{enumerate}[\rm (a)]
\item If $\cha(\Sigma) = \emptyset$, then $\chi(\Sigma) =0$.
\item If $K_{\Sigma,E}$ is non-negative in a neighborhood of $\cha(\Sigma)$, then $\Sigma$ is homeomorphic to a sphere or a torus.
\end{enumerate}
\end{corollary}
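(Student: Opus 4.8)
The plan is to read off both parts directly from Theorem~\ref{th:main}, treating the right-hand side $2\pi\chi(\Sigma)$ as a quantity that is pinned down by the sign behaviour of $K_{\Sigma,E}$ near $\cha(\Sigma)$. The key observation is that, by the theorem, the derivative $\frac{d}{dc}\big|_{c=0}\int_{|a|>1-c}K_{\Sigma,E}\,d\sigma$ only sees the integrand on the shrinking collar $\{1-c<|a|\le 1\}$, i.e. on an arbitrarily small neighborhood of the characteristic set $\cha(\Sigma)=\{|a|=1\}$. Thus the entire Euler characteristic is recovered from data localized at $\cha(\Sigma)$, and this is the engine driving both statements.

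For part (a), I would argue that if $\cha(\Sigma)=\emptyset$ then $|a(x)|<1$ for every $x\in\Sigma$, so by compactness of $\Sigma$ there is a constant $\delta>0$ with $|a(x)|\le 1-\delta$ for all $x$. Consequently, for every $0<c<\delta$ the region $\{|a|>1-c\}$ is empty, so $\int_{|a|>1-c}K_{\Sigma,E}\,d\sigma=0$ identically in $c$ on $(0,\delta)$. The function $c\mapsto 0$ has vanishing derivative at $c=0$, and Theorem~\ref{th:main} then forces $2\pi\chi(\Sigma)=0$, i.e. $\chi(\Sigma)=0$. No analytic delicacy arises here beyond noting that the one-sided derivative at $c=0$ is computed from values on $(0,\delta)$.

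For part (b), the hypothesis is that $K_{\Sigma,E}\ge 0$ on some neighborhood $U$ of $\cha(\Sigma)$. Since $\cha(\Sigma)=\{|a|=1\}$, for all sufficiently small $c>0$ the collar $\{1-c<|a|\le 1\}$ is contained in $U$, so the integrand $K_{\Sigma,E}$ is nonnegative there. Hence $c\mapsto\int_{|a|>1-c}K_{\Sigma,E}\,d\sigma$ is nondecreasing for small $c>0$ and equals $0$ at $c=0$, giving a nonnegative one-sided derivative at $c=0$; by Theorem~\ref{th:main} this yields $2\pi\chi(\Sigma)\ge 0$, i.e. $\chi(\Sigma)\ge 0$. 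For a closed orientable surface $\chi(\Sigma)=2-2g\le 2$ is always bounded above, and the only nonnegative values it can take are $\chi(\Sigma)=2$ (the sphere, $g=0$) and $\chi(\Sigma)=0$ (the torus, $g=1$); every higher-genus surface has $\chi\le -2$. The classification of closed orientable surfaces then identifies $\Sigma$ as homeomorphic to a sphere or a torus, as claimed.

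The one point requiring care — and the main obstacle — is justifying that the sign of $K_{\Sigma,E}$ on a neighborhood of $\cha(\Sigma)$ indeed controls the sign of the one-sided derivative at $c=0$, rather than merely the sign of the integral for fixed small $c$. Concretely, I would need to ensure that the collars $\{1-c<|a|\le 1\}$ genuinely shrink into $U$ as $c\downarrow 0$ (which follows from continuity of $a$ and compactness, since $\{|a|\le 1-c_0\}\cap U^c$ exhausts $\Sigma\setminus\cha(\Sigma)$ up to the closed set where $|a|$ is bounded away from $1$), and to confirm that the orientability of $\Sigma$ is available so that $\chi(\Sigma)=2-2g$; this is guaranteed because $\Sigma$ is assumed compact and oriented in the setup preceding Theorem~\ref{th:main}. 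Once these localizations are in place, both conclusions follow from the monotonicity argument together with the classification theorem, with no further computation.
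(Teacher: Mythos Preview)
Your proposal is correct and is exactly the straightforward deduction the paper has in mind; the paper does not give a separate proof, simply labeling the result a ``simple corollary'' of Theorem~\ref{th:main}. Your handling of the two minor subtleties (compactness forcing $|a|\le 1-\delta$ in part~(a), and the collar $\{|a|>1-c\}$ eventually lying in the neighborhood $U$ so that monotonicity gives a nonnegative one-sided derivative in part~(b)) is sound and completes the argument.
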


The structure of the paper is as follows. In Section~\ref{sec:3dContact}, we give the basic definitions related to sub-Riemannian 3-dimensional contact manifolds $(M,E,g)$ with a surface $\Sigma \subseteq M$. We will also give a sufficient condition for \eqref{A} in Section~\ref{sec:Asufficient}. In Section~\ref{sec:Variational}, we show how curvature of the taming Riemannian metric $g_\ve$ varies with $\ve$. We will continue with computations by looking at the metric $h_\ve$ on $\Sigma$ from the restriction of $g_\ve$ in Section~\ref{sec:Embedded}. In Section~\ref{sec:ProofGB}, we give the proof of Theorem~\ref{th:main} in several steps. With some minor restrictions on the boundary, we give a Gauss-Bonnet theorem for surfaces with a piecewise $C^2$-boundary in Section~\ref{sec:Boundary}.

\subsection{Relation to previous works} \label{sec:previous}
Let $K^\ve$ denote the Gaussian curvature denote the Gaussian curvature of $\Sigma$ with respect to $h_\ve$, and let $\sigma^\ve$ be its volume form. If we consider the limit of the equation $\int_{\Sigma} K^\ve d\sigma^\ve = 2\pi \chi(\Sigma)$ as $\ve \to 0$, then away from the characteristic set, $K^\ve d\sigma^\ve$ only has terms of half-integer order with respect to $\ve$, starting with order $-1/2$, see Section~\ref{sec:ProofGB} for details. Isolating this part of order $-1/2$, and using that $\sqrt{\ve} \int_{\Sigma} K^\ve d\sigma^\ve =0$, we obtain an identity for a part of the curvature that has vanishing average over $\Sigma$. This observation is found in our paper in Remark~\ref{re:vanish}. Such results have appeared previously for given ambient contact manifolds such as Heisenberg group and $\mathrm{SE}(2)$, see \cite{DV16,BTV17,Liu_Miao_Li_Guan_2021,Guan_Liu_2021,Li_Liu_2022,Veloso_2023a} for examples.
These examples include assumptions to avoid any contribution from the characteristic set $\cha(\Sigma)$. For example, in \cite{BTV17}, the surface $\Sigma$ is defined locally as the level set of a function $u$, and it is assumed that $\frac{1}{\| \nabla^E u\|}$ is integrable near the characteristic set, which in our notation is equivalent to assuming that $\frac{1}{\sqrt{1-a^2}}$ is integrable close to the set $|a| =1$. See Section~\ref{sec:Local} for details.

Our paper differ in that we are mainly interested in how the topological Euler characteristic $\chi(\Sigma)$ is preserved under the limit. We are hence interested in the term of order zero of the integral $\int_{\Sigma} K^\ve d\sigma^\ve$, which can only be obtained through the study of the integral close to $\cha(\Sigma)$. We also have no restriction on the ambient sub-Riemannian contact manifold $(M, E,g)$, apart from assumption \eqref{A}.

Finally, we also mention similar work in \cite{Agrachev_Boscain_Sigalotti_2008}, on almost-Riemannian manifolds such as the Gruhsin plane, which can be considered as the case where $h^\ve$ approaches a Riemannian metric outside a small singular set.

\section{Contact manifolds and the horizontal angle parameter} \label{sec:3dContact}
\subsection{Contact distributions and the Reeb vector field} \label{sec:ContactReeb}
Let $M$ be a three dimensional manifold, with $E$ being a rank two, contact distribution. In other words, we have $E + [E,E] = TM$. For simplicity, we will assume that both~$M$ and~$E$ are orientable and with chosen orientations. It follows that the subbundle $\Ann E \subseteq T^*M$ of covectors vanishing on $E$ is orientable as well. Let $E$ be equipped with a fiber metric $g_E$ making $(M, E, g_E)$ into a sub-Riemannian manifold. Let $\alpha$ be the unique non-vanishing section of $\Ann(E)$ satisfying $d\alpha(u,v) = -1$ for any positively oriented orthonormal basis $u,v \in E_x$ and any~$x \in M$. \emph{The Reeb vector field} is then the unique vector field $Z$ satisfying
$$\alpha(Z) = 1, \qquad d\alpha(Z, \, \cdot \, ) = 0.$$
We will use the Reeb vector field to extend $g_E$ to a Riemannian metric. Consider a Riemannian metric $g$ on $M$ such that $g |_E = g_E$ and such that $Z$ is orthogonal to $E$ and a unit vector field. We write $g = \langle \, \cdot \, , \, \cdot \, \rangle$.

Introduce a tensor $J: TM \to E$ by $u,v \in T_x M$, $x\in M$,
$$d\alpha(v,w)  = \langle v, Jw \rangle = - \langle Jv, w \rangle.$$
By our definition of $\alpha$ it follows that $J$ is an almost complex structure when restricted to $E$, corresponding to a rotation of $\frac{\pi}{2}$ in the positive direction.
We finally introduce a symmetric tensor $\tau: TM \to E\subseteq TM$ by
$$\langle \tau v, w \rangle = \langle v, \tau w \rangle = \frac{1}{2} (\mathcal{L}_Z g)(\pr_E v, \pr_E w).$$
We emphasize that from the previous definitions $\tau Z = JZ = 0$.

We want a connection such that both $E$ and $E^\perp$ are parallel and with the torsion as simple as possible. We define a connection $\nabla$ such that if $Z$ is the Reeb vector field and $Y_1$ and $Y_2$ are arbitrary sections of $E$, then
$$\nabla Z = 0, \qquad \nabla_Z Y_1 = [Z,Y_1] + \tau Y_1, \qquad \nabla_{Y_1} Y_2  = \pr_E \nabla^g_{Y_1} Y_2,$$
where $\pr_E$ is the $g$-projection to $E$ and $\nabla^g$ is the Levi-Civita connection of $g$. This connection is compatible with $g$ and has torsion
\begin{align*}
T(V, W)  & = -  \langle JV, W \rangle Z + \alpha(V) \tau W - \alpha(W) \tau V, & & V, W \in \Gamma(TM).
\end{align*}
See \cite{Tanno89,Hladky12} for more about the choice of connection in this setting, and this connection in particular. We have the following identity.

\begin{remark} \label{re:identities}
We note the following identities
\begin{equation} \label{identities} \nabla J = 0, \qquad \tr_g \tau = 0, \qquad \tau J = - J \tau.\end{equation}
We also observe that since $\mathcal{L}_Z d\alpha = d \mathcal{L}_Z \alpha =0$, we have $\mathcal{L}_Z J = 2 \tau J$. For proof, see e.g., \cite{Gro20}.
\end{remark}

\subsection{Surfaces and the horizontal angle parameter} \label{sec:Xvector}
Let $\Sigma$ be an oriented, compact surface embedded into $(M,E,g_E)$ without boundary. Recall that $\alpha$ is the contact form and $g$ is the extension to a Riemannian metric using the Reeb vector field. We write $h = g |_{T\Sigma}$ for the induced Riemannian metric on $\Sigma$ from the metric~$g$, with $\sigma$ being the corresponding volume form.  We will use the following notation for the rest of the paper. We define \emph{the horizontal angle parameter} $a(x)$ at $x \in \Sigma$ as the inner product between the positively oriented normal vectors of $T_x\Sigma$ and $E_x$. If $v_1, v_2$ and $w_1, w_2$ are positively oriented orthonormal bases of respectively $T_x \Sigma$ and $E_x$, then
$$a(x) = \langle v_1 \wedge v_2, w_1 \wedge w_2 \rangle = - d\alpha(w_1, w_2).$$
We observe that $|a(x)| =1$ exactly at $\cha(\Sigma)$, the set of points $x \in \Sigma$ where $T_x \Sigma = E_x$. 
Write
\begin{equation} \label{Sigmac}
\Sigma_{c_1 \leq c_2} = \{x \in \Sigma \, : \,  c_1 \leq a(x) \leq c_2 \}, \quad \Sigma_{c} = \Sigma_{c \leq c} \quad \Sigma' = \Sigma \setminus \cha(\Sigma).
\end{equation}
\begin{lemma}
Let $\beta = \alpha|_{T\Sigma}$ be the restriction of the contact form to $\Sigma$.
\begin{enumerate}[\rm (a)]
\item We have identity $d\beta = - a \sigma$ and $\|\beta\| =\sqrt{1-a^2}$.
\item There exists a unique unit vector field $X$ on $\Sigma'$ with values in $T\Sigma' \cap E|_{\Sigma'}$ such that
$$\sqrt{1-a^2} \iota_X \sigma = \beta.$$
Furthermore, the unique vector field $X_2$ such that $X, X_2$ is a positive orthonormal basis is given by
$$X_2 = \sqrt{1-a^2} Z + a JX.$$
\end{enumerate}
\end{lemma}
\begin{proof}
\begin{enumerate}[\rm (a)]
\item This follows directly from the definition of $a$.
\item Since $\beta$ is non-vanishing on $\Sigma'$, we obtain that $\ker \beta$ is a one-dimensional line-bundle spanned by some unit vector field $X$, uniquely determined up to sign. As $\iota_X \sigma$ is a non-vanishing function that vanishes on $\spn\{ X\}$, it follows that $\iota_X \sigma = \varphi_1 \beta$ for some non-vanishing function $\varphi_1$. By requiring $\varphi_1 > 0$, the choice of $X$ is unique. Furthermore, if $X$, $X_2$ is a positively oriented orthonormal basis, then
$$\sigma(X, X_2) = 1 = (\iota_X \sigma)(X_2) = \varphi_1 \beta(X_2), \qquad d\beta(X, X_2) = -a.$$
It follows that $X_2 = \varphi_1 Z + a JX$ and since $X_2$ must have length 1, we finally have $\varphi_1 = \sqrt{1-a^2}$.
\end{enumerate}
\end{proof}

To simplify notations later, we will also introduce the following functions. Define $\tau_0, \tau_1: \Sigma' \to \mathbb{R}$ by $\tau_0 = \langle \tau X, X \rangle = - \langle \tau JX, JX \rangle$ and $\tau_1 = \langle \tau X, JX \rangle$. We remark the following relationship.
\begin{lemma}
\begin{equation}
\label{XaIdentity} \frac{Xa}{\sqrt{1 -a^2}}  = a^2  + \sqrt{1-a^2}\langle \nabla_{X_2} X , JX \rangle -(1-a^2) \tau_1.
\end{equation}
\end{lemma}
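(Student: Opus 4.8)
The plan is to extract \eqref{XaIdentity} from the single fact that both $X$ and $X_2$ are tangent to $\Sigma'$, so that $[X,X_2]$ is tangent to $\Sigma'$ as well, hence $g$-orthogonal to the unit normal. Specialize to $\ve = 1$, where $\{X,X_2\}$ is a positively oriented $h$-orthonormal frame of $T\Sigma'$ with unit normal $N := N^1 = aZ - \sqrt{1-a^2}\,JX$ (recall $|a|<1$ on $\Sigma'$, so all objects below are well defined). The claim is that \eqref{XaIdentity} is precisely the equation $\langle [X,X_2], N\rangle = 0$ written out in the frame $\{X, JX, Z\}$.

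To evaluate this, I would rewrite the bracket through the connection $\nabla$ of Section~\ref{sec:3dContact} as $[X,X_2] = \nabla_X X_2 - \nabla_{X_2}X - T(X,X_2)$ and pair each term with $N$. For the torsion term, using $\alpha(X)=0$, $\alpha(X_2)=\sqrt{1-a^2}$, $\langle JX, X_2\rangle = a$ and the torsion formula gives $T(X,X_2) = -aZ - \sqrt{1-a^2}\,\tau X$, and then $\langle T(X,X_2), N\rangle = -a^2 + (1-a^2)\tau_1$, after invoking $\tau Z = 0$, that $\tau$ is $E$-valued, and $\tau_1 = \langle \tau X, JX\rangle$. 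For $\nabla_X X_2$, expand $X_2 = \sqrt{1-a^2}\,Z + a\,JX$ using $\nabla Z = 0$ and $\nabla J = 0$; the resulting $\nabla_X X$-term drops out of the pairing with $N$ (since $J$ is $E$-valued and $\langle \nabla_X X, X\rangle = 0$), and reading off the $Z$- and $JX$-components leaves $\langle \nabla_X X_2, N\rangle = -Xa/\sqrt{1-a^2}$. For $\nabla_{X_2}X$, the relations $\langle \nabla_{X_2}X, X\rangle = 0$ and $\langle \nabla_{X_2}X, Z\rangle = -\langle X, \nabla_{X_2}Z\rangle = 0$ force $\nabla_{X_2}X$ to be a multiple of $JX$, whence $\langle \nabla_{X_2}X, N\rangle = -\sqrt{1-a^2}\,\langle \nabla_{X_2}X, JX\rangle$. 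Substituting the three pairings into $0 = \langle \nabla_X X_2, N\rangle - \langle \nabla_{X_2}X, N\rangle - \langle T(X,X_2), N\rangle$ and solving for $Xa/\sqrt{1-a^2}$ produces \eqref{XaIdentity}.

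The computation itself is routine; the only real point is choosing the right relation. Differentiating $a$ directly---say from $a = \langle X_2, JX\rangle$ or from $\alpha(X_2) = \sqrt{1-a^2}$---yields only tautologies, because $X_2$, $N$ and $a$ are mutually defined purely through $\nabla$, $J$ and $\tau$. It is the submanifold hypothesis, in the guise of tangency of $[X,X_2]$ (equivalently, the symmetry $\II^1(X,X_2) = \II^1(X_2,X)$ of the second fundamental form), that injects the new information; I expect that recognizing this is the main obstacle, the rest being sign and orientation bookkeeping together with the remark that $X$, $X_2$, $\nabla_X X_2$ and $\nabla_{X_2}X$ are all well defined on $\Sigma'$.
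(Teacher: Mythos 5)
Your proposal is correct and takes essentially the same approach as the paper: both derive \eqref{XaIdentity} by writing out $\langle [X,X_2], N\rangle = 0$, the paper expanding the Lie bracket directly by the Leibniz rule and converting $[Z,X]$ and $[JX,X]$ into $\nabla$ and $\tau$ at the end, while you route the same computation through the connection and its torsion from the start. The difference is purely organizational, and your identification of the tangency of $[X,X_2]$ as the one non-tautological input matches the paper's proof exactly.
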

\begin{proof}
We consider the normal vector $N = a Z - \sqrt{1-a^2} JX$. Since $T\Sigma$ is Frobenius integrable, we have $\langle N, [X, X_2] \rangle =0$. The bracket is
\begin{align*}
[X,X_2] & \textstyle =- \frac{a Xa}{\sqrt{1-a^2}} Z+  Xa JX + \sqrt{1-a^2} [X,Z] + a [X,JX]
\end{align*}
and so
\begin{align*}
0 = \langle N, [X,X_2] \rangle & \textstyle =-  \frac{a^2 Xa}{\sqrt{1-a^2}} + a^2 - \sqrt{1-a^2} Xa \\
& \textstyle \qquad  + \sqrt{1-a^2} \langle \sqrt{1-a^2} [Z,X] + a [JX,X], JX \rangle \\
&\textstyle =-  \frac{ Xa}{\sqrt{1-a^2}} + a^2   + \sqrt{1-a^2} \langle \nabla_{X_2} X, JX \rangle - (1-a^2) \langle \tau X, JX \rangle .
\end{align*}
The result follows.
\end{proof}

\begin{remark}[Characteristic vector fields] \label{re:Characteristic}
Let us compare this basis $X$, $X_2$ to previous work in \cite[Chapter~4.6]{Geiges08}, see also \cite{BBC22}.
\emph{A characteristric vector field} on~$\Sigma$ is a vector field $\tilde X$ with values in $E \cap T \Sigma$ that vanishes on~$\cha(\Sigma)$, while also satisfying
$$\dv(\tilde X)(x) \neq 0, \qquad x \in \cha(\Sigma).$$
We get such a globally defined vector field by $\tilde X = \sqrt{1-a^2} X$, which will then satisfy $\iota_{\tilde X} \sigma = \beta$ and
$$\dv(\tilde X) \sigma = d\iota_{\tilde X} \sigma= d\beta = - a\sigma.$$
Hence the horizontal angle parameter can be seen as the negative divergence of this characteristic vector field $\tilde X$. Observe that for the vector field $X$, we will hence have
$$\dv(X) = \frac{a}{\sqrt{1-a^2}} \left( \frac{Xa}{1-a^2} - 1 \right).$$
\end{remark}


\subsection{Local description} \label{sec:Local}
Working locally, we may assume that $\Sigma = u^{-1}(0)$ is the level set of a function $u: M \to \mathbb{R}$ and that $E$ has a local, positively oriented, orthonormal
basis $Y_1$ and~$Y_2$. Let $\nabla u$ and $\nabla^E u$ be respectively the Riemannian and sub-Riemannian gradient, i.e., $\nabla^E u = Y_1u Y_1 + Y_2 u Y_2$ and $\nabla u = \nabla^E + Zu Z$. We then observe that $T\Sigma$ is spanned by orthonormal basis
\begin{align*}
X & = \frac{1}{\| \nabla^E u\|} (- (Y_2u) Y_1 + (Y_1 u) Y_2), \\
JX & = \frac{1}{\| \nabla^E u\|} (- (Y_1u) Y_1 - (Y_2 u) Y_2), \\
X_2 & = \frac{1}{\| \nabla u\| \|\nabla^E u\|} \left( \| \nabla^E u\|^2 Z - (Zu) (Y_1u) Y_1 - (Zu) (Y_2u) Y_2\right).
\end{align*}
with $X$ being in $E$. We give $\Sigma$ orientation by defining $X$, $X_2$ to be positively oriented. We then see that
$$-d\alpha(X, X_2) = a = \frac{Zu}{\|\nabla u\|}.$$
and hence $\sqrt{1-a^2} = \frac{\| \nabla^E u \|}{\|\nabla u\|}$. Replacing $u$ with $U = \frac{1}{|\nabla u|} u$, we have $U^{-1}(0) = \Sigma$, but now $a = Z U$ and $\sqrt{1-a^2}  = \| \nabla^E U\|$. It follows that
\begin{align*}
K_{\Sigma,E} & =
- \frac{ \langle [X,X_2], X\rangle_h- \langle [X,Z], X \rangle  \| \nabla^E U\| X_2 ZU}{ZU} X_2 ZU \\
 & \qquad -  \frac{(XZU)^2+ \| \nabla^E U \| XZU}{\| \nabla^E U \|^2} +(ZU)^2.
\end{align*}

\begin{example}[The Euclidean unit sphere in the Heisenberg group]
We consider the Heisenberg group as $M = \mathbb{R}^3$ where $E$ has a positively oriented orthonormal basis $A = \partial_x - \frac{1}{2} y \partial_z$ and $B = \partial_y + \frac{1}{2} x \partial_z$. The corresponding Reeb vector field is $Z = \partial_z$. For this case, we can verify that $\tau = 0$ globally. Switching to cylindrical coordinates $(r,\theta,z)$, we see that $E$ is spanned by a positively oriented orthonormal basis
$$\textstyle R = \partial_r , \qquad \Theta = \frac{1}{r} \partial_\theta + \frac{1}{2} r \partial_z.$$
The corresponding contact form is $\alpha = dz - \frac{1}{2} r^2 d\theta$. We have $JR = \Theta$ and $J\Theta = -R$.

Inside the Heisenberg group, we consider $\Sigma$ as the Euclidean sphere which is a level set of $u = r^2 + z^2 -1$. It follows that
$$\textstyle \nabla^E u  =  2r R + rz \Theta, \qquad \nabla u = \nabla^E u + 2z Z, \qquad a= \frac{2z}{\sqrt{r^2(4+z^2) + 4z^2}}$$
and
\begin{align*}
X & \textstyle = \frac{1}{\sqrt{(4+z^2)}} (- z R + 2 \Theta), \\
X_2 & \textstyle = \frac{1}{\sqrt{r(4+z^2) +4 z^2} \sqrt{4+z^2}} \left( r (4+z^2) Z - 4z R - 2  z^2 \Theta \right).
\end{align*}
We see that $\cha(\Sigma)$ are the points where $r=0$. Outside this set, we can use $(z,\theta)$ as global coordinates and have
\begin{align*}
a & \textstyle = \frac{2z}{\sqrt{(1-z^2)(4+z^2) + 4z^2}}, \\
\sqrt{1-a^2} & \textstyle = \frac{\sqrt{1-z^2} \sqrt{4+z^2}}{\sqrt{(1-z^2)(4+z^2) + 4z^2}}, \\
X & \textstyle = \frac{2}{\sqrt{(4+z^2)} \sqrt{1-z^2}} ((1-z^2) \partial_z + \partial_\theta  ), \\
X_2 & \textstyle = \frac{1}{\sqrt{(1-z^2)(4+z^2) +4 z^2} \sqrt{4+z^2}} \left( 8 \sqrt{1-z^2} \partial_z - \frac{2z^2}{\sqrt{1-z^2}} \partial_\theta  \right). \\
\end{align*}
and
\begin{align*}
\sigma & \textstyle = \frac{1}{4} \sqrt{(1-z^2)(4+z^2) +4 z^2} d\theta \wedge dz = \frac{(4 + z^2 - z^4)^2}{8(4 -z^2 +3z^4)}  d\theta \wedge da \\
& \textstyle = \left(\frac{1}{2} + O(|a|-1) \right) d\theta \wedge da 
\end{align*}
as $a \to \pm 1$. It follows that $\int_{1-c \leq |a| \leq 1} \sigma = \pi c + O(c^2)$, and particular, \eqref{A} is satisfied.
We finally have since $\tau = 0$, we have $\langle [X,Z], X \rangle_g =0$ and,
$$\textstyle K_{\Sigma,E} =- \frac{1}{a} \langle [X,X_2], X\rangle X_2a-  \frac{(Xa)^2}{1-a^2} - \frac{Xa}{\sqrt{1-a^2}} +a^2,$$

\end{example}

\subsection{A sufficient condition for \eqref{A}} \label{sec:Asufficient}
We present the following sufficient condition for our Assumption \eqref{A}, in terms of the geometry of the level sets of $a$.
\begin{proposition}
Assume that the following two assumptions hold.
\begin{enumerate}[\rm (i)]
\item 1 is an isolated critical value of $|a|$, i.e., for some $0< c_1 <1$, $\nabla^h a|_x \neq 0$ whenever $c_1 < |a(x)| <1$.
\item For $\Sigma_c$ as in \eqref{Sigmac} with length $\ell(\Sigma_c)$, we have for some constants $C_{\pm}\geq 0$,
$$\ell(\Sigma_c) = C_{\pm}+  O(|c|^{1/2}), \quad \text{as}\quad  c\to \pm 1.$$
\end{enumerate}
Then \eqref{A} holds.
\end{proposition}

\begin{proof}
Define $\tilde \Phi(c) = \int_{\Sigma_{0 \leq c}} a \sigma$ and $\tilde \Phi(-c) = - \int_{\Sigma_{-c\leq 0}} a \sigma$ for $c \geq 0$.
For proving assumption \eqref{A}, it is sufficient to show that it holds for $\tilde \Phi$. By (i),
$$a:\Sigma \to [-1,1]$$
is a submersion on $a^{-1} (c_1, 1)$ which is proper by the compactness of $\Sigma$. It is a fiber bundle by the Ehresmann theorem, hence, we can use $a$ as a coordinate, and consider the fibers to be diffeomorphic to a one-dimensional manifold $F$. Furthermore, on $a^{-1}(c_1, 1)$, write
$$a \sigma = \nu \wedge da, \qquad \text{with} \qquad \nu = \frac{a}{\| \nabla^h a\|^2} ((X_2a) X^*  - (Xa) X_2^*).$$
We then have that $\Phi(1) -\Phi(c_1) = \int_{c_1}^1 \int_{y\in F} \nu(a,y) da$, and hence $\Phi'(c) = \int_{y\in F} \nu(c,y)$.

Next, we need to show that $\Phi'(c)$ is uniformly bounded on $(c_1, 1)$. Since $\Phi'(c) >0$, it is sufficient to show that $\limsup_{c\to 1} \frac{\Phi(1) - \Phi(c)}{1-c} <\infty$. We observe that since $|\beta| = \sqrt{1-a^2}$,
$$\Phi(1)- \Phi(c) = \int_{\Sigma_{c \leq 1}} a \sigma = - \int_{\Sigma_{c\leq 1}} d\beta = - \int_{\Sigma_c} \beta \leq \sqrt{1-c^2} \ell(\Sigma_c),$$
it follows by (ii)
$$\limsup_{c\to 0} \frac{\Phi(1)- \Phi(c)}{1-c} \leq  \limsup_{c\to 0} \ell(\Sigma_c) \frac{\sqrt{1+c}}{\sqrt{1-c}}< \infty.$$
\end{proof}

\section{Geometric identities and the variational metric}  \label{sec:Variational}
\subsection{Variational metrics and the Levi-Civita connection}
We will now define a family of Riemannian metrics $g_\ve$ on $M$ such that $g_\ve |_E = g_E$ and such that $Z$ is orthogonal to $E$ with
$$\textstyle g_\ve(Z, Z) = \langle Z, Z \rangle_\ve = \frac{1}{\ve}.$$
Remark that $\ve = 1$ corresponds to our case in Section~\ref{sec:3dContact}. The tensors $J$ and $\tau$ are still skew-symmetric and symmetric respectively
relative to these metrics. Furthermore, if $\nabla$ is as in Section~\ref{sec:ContactReeb}, then this connection is compatible with $g_\ve$ for every $\ve >0$. 
Our goal first will be to relate the geometry of $(M,g_\ve)$ using the tensors~$J$ and~$\tau$.
For each $g_\ve$, we have a corresponding Levi-Civita connection $\nabla^\ve$.
\begin{lemma} \label{lemma:LeviCivita}
Introduce an operator
$$\textstyle Q_\ve = \frac{1}{2} J - \ve \tau.$$
Then for arbitrary vector fields $W_1$ and $W_2$ in $\Gamma(TM)$,
\begin{equation} \label{nablave} \nabla_{W_1}^\ve W_2 = \nabla_{W_1} W_2 +\left\langle Q_\ve W_1, W_2 \right\rangle Z - \frac{1}{\ve} \alpha(W_2) Q_\ve W_1 - \frac{1}{2\ve} \alpha(W_1) J W_2. \end{equation}
\end{lemma}

\begin{proof}
This follows from simple application of the Koszul formula. If $Y_0, Y_1, Y_2 \in \Gamma(E)$, then
\begin{align*}
\langle \nabla_{Y_0}^\ve Y_1, Y_2 \rangle_\ve & \textstyle = \langle \nabla_{Y_0} Y_1, Y_2 \rangle_\ve,  & \langle \nabla_{Y_0}^\ve Z, Z \rangle_\ve & = 0, \\
\langle \nabla_{Y_0}^\ve Y_1, Z \rangle_\ve & = \textstyle\frac{1}{2\ve} \langle [Y_0,Y_1], Z \rangle - \langle \tau Y_0, Y_1 \rangle,  & \langle \nabla_Z^\ve Y_1, Z \rangle_\ve & =  - \langle \nabla_Z^\ve Z, Y_1 \rangle_\ve = 0 \\
\langle \nabla_{Y_0}^\ve Z, Y_1 \rangle_\ve & = \textstyle\langle \tau  Y_0, Y_1 \rangle - \frac{1}{2\ve} \langle [Y_0, Y_1], Z \rangle, & \langle \nabla_Z^\ve Z, Z \rangle_\ve &=  0.
\end{align*}
and $\langle \nabla_Z^\ve Y_1, Y_2 \rangle_\ve = \langle [Z, Y_1], Y_2 \rangle + \langle \tau Y_1, Y_2 \rangle  - \frac{1}{2\ve}\langle [Y_1, Y_2], Z \rangle$.
The result follows.
\end{proof}

\subsection{Expansion of the curvature}
We want to see how the curvature tensor of~$\nabla^\ve$ changes with respect to $\ve$, described by the relations below.
\begin{lemma}
If $Y$ be a vector field with values in $E$. We then have identities
\begin{align*}
\langle R^\ve(Y,JY) JY, Y \rangle_\ve &= \langle R(Y,JY) JY, Y \rangle - \frac{3}{4\ve} \| Y\|^4  + \frac{\ve}{2} \|\tau\|^2\|Y||^4, \\
\langle R^\ve(Y, JY) Y , Z \rangle_\ve & = \langle (\nabla_{JY} \tau) Y, Y \rangle - \langle (\nabla_{Y} \tau) JY, Y \rangle, \\
\langle R^\ve(Y, Z) Z, Y \rangle_\ve & =  \frac{1}{4\ve^2} \| Y\|^2  -  \| \tau Y \|^2  - \langle (\nabla_Z \tau) Y, Y \rangle - \frac{1}{\ve} \langle \tau Y, JY \rangle, \\
\langle R^\ve(Y, Z) JY, Z \rangle_\ve & = - \frac{1}{\ve} \langle \tau Y , Y \rangle +   \langle (\nabla_Z \tau) Y + \tau^2 Y, JY \rangle .
\end{align*}
\end{lemma}

\begin{proof}
For general vector fields $V_j$, $j=1,2,3,4$, write $\eta_{V_1}^\ve V_2 := \nabla^\ve_{V_1} V_2 - \nabla_{V_1} V_2$, then
$$R^\ve(V_1, V_2)  - R(V_1, V_2)  = (\nabla_{V_1} \eta^\ve)_{V_2} - (\nabla_{V_2} \eta^\ve)_{V_1} + \eta_{T(V_1, V_2)}^\ve + [\eta_{V_1}^\ve,\eta_{V_2}^\ve]. $$
From Lemma~\ref{lemma:LeviCivita}, we have $\eta_{V_1}^\ve V_2 = \left\langle Q_\ve V_1, V_2 \right\rangle Z - \frac{1}{\ve} \alpha(V_2) Q_\ve V_1 - \frac{1}{2\ve} \alpha(V_1) J V_2$, which gives us
\begin{align*}
(\nabla_{V_3}\eta^\ve)_{V_1} V_2 & =  -\ve \left\langle (\nabla_{V_3} \tau) V_1, V_2 \right\rangle Z + \alpha(V_2) (\nabla_{V_3} \tau) V_1, \\
\eta_{T(V_1,V_3)}^\ve V_2 & = \alpha(V_1) \left\langle Q_\ve \tau V_3, V_2 \right\rangle Z -  \frac{1}{\ve} \alpha(V_1) \alpha(V_2) Q_\ve \tau V_3 \\
& \qquad - \alpha(V_3) \left\langle Q_\ve \tau V_1, V_2 \right\rangle Z +  \frac{1}{\ve} \alpha(V_3) \alpha(V_2) Q_\ve \tau V_1 + \frac{1}{2\ve} \langle JV_1, V_3\rangle  J V_2
\end{align*}
Using these identities, we have
\begin{align*}
& \langle R^\ve(Y,JY) JY, Y \rangle_\ve - \langle R(Y,JY) JY, Y \rangle \\
& = - \| Y\|^2 \langle \eta_{Z}^\ve JY, Y \rangle + \langle [\eta_{Y}^\ve , \eta_{JY}^\ve ] JY, Y \rangle \\
& = - \frac{1}{2\ve} \| Y \|^4 - \frac{1}{\ve} \langle Q_\ve JY , JY\rangle \langle Q_\ve Y, Y \rangle + \frac{1}{\ve} \langle Q_\ve Y , JY\rangle \langle Q_\ve JY, Y \rangle \\
& = - \frac{1}{2\ve} \| Y \|^4 + \ve \langle \tau Y , Y\rangle^2  + \ve \langle \tau Y , JY\rangle^2  - \frac{1}{4\ve}\| Y\|^4 \\
& =  - \frac{3}{4\ve} \| Y \|^4 + \ve \| \tau \|^2\| Y\|^4.
\end{align*}
Using that $\nabla$ and hence its curvature operator preserves $E$ and $E^\perp$, then
\begin{align*}
& \langle R^\ve(Y, JY) Y , Z \rangle_\ve = \frac{1}{\ve} \langle (\nabla_{Y} \eta^\ve)_{JY} Y - (\nabla_{JY} \eta^\ve)_{Y} Y - \eta_Z^\ve Y + [\eta^\ve_Y, \eta^\ve_{JY}] Y, Z\rangle \\
& = - \langle  (\nabla_{Y} \tau) JY - (\nabla_{JY} \tau) Y , Y \rangle .
\end{align*}
\begin{align*}
\langle R^\ve(Y, Z) Z, Y \rangle_\ve & =  \langle (\nabla_Y \eta^\ve)_Z Z - (\nabla_Z \eta^\ve)_Y Z - \eta_{\tau Y}^\ve Z + [\eta_Y^\ve, \eta_Z^\ve] Z , Y \rangle \\
& =  \langle  -  (\nabla_Z \tau) Y +  \frac{1}{\ve}  Q_\ve \tau Y  - \frac{1}{2\ve^2} J Q_\ve Y , Y \rangle \\
& = - \langle (\nabla_Z \tau) Y, Y \rangle - \frac{1}{\ve} \langle \tau Y, JY \rangle - \| \tau Y\|^2  + \frac{1}{2\ve^2} \| Y \|^2 .
\end{align*}
\begin{align*}
\langle R^\ve(Y, Z) JY, Z \rangle_\ve & = - \langle  -  (\nabla_Z \tau) Y +  \frac{1}{\ve}  Q_\ve \tau Y  - \frac{1}{2\ve^2} J Q_\ve Y , JY \rangle \\
& =  \langle (\nabla_Z \tau) Y, JY \rangle - \frac{1}{\ve} \langle \tau Y, Y \rangle + \langle \tau Y, \tau JY \rangle.
\end{align*}
The result follows.
\end{proof}

\section{Embedded surfaces and variational metric} \label{sec:Embedded}
In this section, it will be convenient to introduce notation
$$b_\ve = \sqrt{1-(1-\ve) a^2},$$
such that $\frac{1}{b_\ve}$ which is uniformly bounded for $0 < \ve < 1$, while unbounded for $\ve =0$. For our surface $\Sigma \subseteq M$, we write $h_\ve = g_\ve |_{T\Sigma}$ for the induced Riemannian metric from the metric $g_\ve$, with $\sigma^\ve$ being the corresponding volume form. Recall the definition of $\Sigma'$ from \eqref{Sigmac}.
Observe that $T\Sigma'$ will then have a positively oriented, orthonormal basis $X, \hat X_2^\ve$ with respect to $h_\ve$, where
$$\hat X^\ve_2 = \textstyle  \frac{\sqrt{\ve}}{b_\ve} X_2, \quad \text{giving us} \quad \sigma^\ve = \frac{b_\ve}{\sqrt{\ve}}.$$
Using the orientation of $\Sigma$ and $M$ we also have the unit normal vector field,
$$\hat N^\ve = \textstyle \frac{1}{b_\ve} N^\ve \qquad N^\ve = \ve a Z - \sqrt{1-a^2} JX.$$
We denote corresponding scalar-valued second fundamental form by $\II^\ve$.

We want to consider the Gaussian curvature $K^\ve$ of $h_\ve$. It will be sufficient to find the formula on $\Sigma'$. If $R^\ve$ is the curvature operator of~$\nabla^\ve$ then
\begin{align} \label{GaussEq}
K^\ve & = \langle R^\ve(X, X_2^\ve) X_2^\ve, X \rangle_\ve + \II^\ve(X, X) \II^\ve(X_2^\ve, X_2^\ve) -  \II^\ve(X, X_2^\ve)^2, \\ \nonumber
&=: \Sec^\ve + \II_{11}^\ve \II_{22}^\ve - (\II^\ve_{12})^2.
\end{align}
We will use this Gauss Equation to compute the curvature.
\begin{lemma}\label{lemma:2ff}
On $\Sigma'$, we have
$$\textstyle \II^\ve_{11} =  - \frac{1 }{b_\ve} \left( \ve a \tau_0 + b_0 \langle \nabla_X X  ,J X \rangle \right), \qquad \II_{22}^\ve  =  \frac{\ve}{b_\ve} \left(-  \frac{X_2 a}{b_\ve^2 b_0} + a  \tau_0 \right) ,$$
$$ \II^\ve_{12}  = - \frac{\sqrt{\ve} X a}{b_0 b_\ve^2} + \frac{1 - 2\ve \tau_1}{2\sqrt{\ve}} = - \frac{\sqrt{\ve}}{b_\ve^2} \left( b_0 \langle  \nabla_{X_2}X, JX \rangle + a^2 (1+\ve\tau_1) \right)   + \frac{1}{2\sqrt{\ve}} .$$
\end{lemma}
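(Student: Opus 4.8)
The plan is to compute $\II^\ve$ directly from the definition $\II^\ve(V,W) = \langle \nabla^\ve_V W, \hat N^\ve \rangle_\ve$ for $V,W$ tangent to $\Sigma'$, expanding $\nabla^\ve$ via \eqref{nablave} in terms of the auxiliary connection $\nabla$. Since $\II^\ve$ is tensorial, replacing a tangent field $W$ by $fW$ for a function $f$ on $\Sigma'$ changes $\nabla^\ve_V W$ only by the tangential (hence $\hat N^\ve$-orthogonal) term $(Vf) W$; as $X_2^\ve = \frac{\sqrt\ve}{b_\ve} X_2$, this reduces the claim to evaluating the three inner products $\langle \nabla^\ve_X X, \hat N^\ve\rangle_\ve$, $\langle \nabla^\ve_{X_2} X_2, \hat N^\ve\rangle_\ve$ and $\langle \nabla^\ve_X X_2, \hat N^\ve\rangle_\ve$, after which $\II^\ve_{11}$ equals the first, $\II^\ve_{22} = \frac{\ve}{b_\ve^2}$ times the second, and $\II^\ve_{12} = \frac{\sqrt\ve}{b_\ve}$ times the third. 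Here $\hat N^\ve = \frac{1}{b_\ve}(\ve a Z - b_0 JX)$.

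First I would record the elementary identities that feed into \eqref{nablave}. From $\alpha(Z) = 1$, $\alpha|_E = 0$ and $JZ = \tau Z = 0$, $J^2|_E = -\id$ one gets $\alpha(X) = 0$, $\alpha(X_2) = b_0$, $JX_2 = -aX$, and, with $Q_\ve = \frac12 J - \ve\tau$, the values $\langle Q_\ve X, X\rangle = -\ve\tau_0$, $\langle Q_\ve X, X_2\rangle = a(\frac12 - \ve\tau_1)$, $\langle Q_\ve X_2, X_2\rangle = \ve a^2\tau_0$ and $Q_\ve X_2 = -\frac a2 X - \ve a\tau JX$. From the structure of $\nabla$ ($E$ and $E^\perp$ parallel, $\nabla Z = 0$, $\nabla J = 0$) I note that for any tangent $V$ one has $\nabla_V X \in \Gamma(E)$ with $\langle \nabla_V X, X\rangle = 0$ (as $X$ has unit length for every $g_\ve$), and $\nabla_V X_2 = (Vb_0) Z + (Va) JX + a J\nabla_V X$.

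Substituting into \eqref{nablave} and contracting with $\hat N^\ve$ — using $\langle Z,Z\rangle_\ve = 1/\ve$, $Z$ orthogonal to $E$, $\|JX\|_\ve = 1$, $\langle X, JX\rangle = 0$, $\langle \tau JX, JX\rangle = -\tau_0$ and $\langle \tau X, JX\rangle = \tau_1$ — the first product collapses, since $\alpha(X) = 0$, to $\langle \nabla_X X - \ve\tau_0 Z, \hat N^\ve\rangle_\ve$ and directly yields $\II^\ve_{11}$. For the other two, the $E^\perp$-part $(Vb_0)Z$ of $\nabla_V X_2$ contributes $a(Vb_0) - b_0(Va)$, which becomes $-Va/b_0$ once one uses $Vb_0 = -aVa/b_0$ and $a^2 + b_0^2 = 1$; gathering the remaining $\tau$- and $\alpha$-correction terms and repeatedly invoking $\ve a^2 + b_0^2 = b_\ve^2$ produces $\langle \nabla^\ve_{X_2} X_2, \hat N^\ve\rangle_\ve = \frac1{b_\ve}\bigl(-\frac{X_2 a}{b_0} + a\tau_0 b_\ve^2\bigr)$ and $\langle \nabla^\ve_X X_2, \hat N^\ve\rangle_\ve = \frac1{b_\ve}\bigl(-\frac{Xa}{b_0} + \frac{b_\ve^2}{2\ve}(1 - 2\ve\tau_1)\bigr)$; multiplying by the normalization factors above gives the first stated formulas for $\II^\ve_{22}$ and $\II^\ve_{12}$. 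The second expression for $\II^\ve_{12}$ then follows by inserting \eqref{XaIdentity}, rewritten as $\frac{Xa}{b_0} = a^2 + b_0\langle \nabla_{X_2} X, JX\rangle - b_0^2\tau_1$, and simplifying the $\tau_1$-coefficient with $b_0^2 - b_\ve^2 = -\ve a^2$.

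There is no analytic difficulty: $X, X_2, Z, JX$ are smooth on $\Sigma'$ and each covariant derivative is required only along directions tangent to $\Sigma'$. The main obstacle is purely organizational — one must consistently decompose every vector into its $E$- and $E^\perp$-components before pairing with $\hat N^\ve$, and keep recognizing the combinations $a^2 + b_0^2 = 1$ and $\ve a^2 + b_0^2 = b_\ve^2$ that collapse the raw expressions to the stated closed forms — and I expect managing this bookkeeping, rather than any individual computation, to be where the effort lies.
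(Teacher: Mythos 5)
Your proposal is correct, and all the intermediate quantities you record (e.g.\ $\langle \nabla^\ve_{X_2}X_2,\hat N^\ve\rangle_\ve = \frac{1}{b_\ve}(-\frac{X_2a}{b_0}+a\tau_0 b_\ve^2)$ and the $\ve a^2+b_0^2=b_\ve^2$ collapses) check out against the stated formulas. This is essentially the paper's argument: the only difference is that you compute $\langle\nabla^\ve_{X_i}X_j,\hat N^\ve\rangle_\ve$ while the paper computes the equivalent $-\frac{1}{b_\ve}\langle\nabla^\ve_{X_i}N^\ve,X_j\rangle_\ve$, both reducing to the auxiliary connection via \eqref{nablave}.
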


\begin{proof}
If we write $X =X_1$, then for $i,j=1,2$, 
\begin{align*}
&\II^\ve(X_i, X_j) = - \langle \nabla_{X_i}^\ve \hat N_\ve ,X_j \rangle_\ve= - \frac{1}{b_\ve} \langle \nabla_{X_i}^\ve N_\ve ,X_j \rangle_\ve, \\
& \textstyle = - \frac{X_i a}{b_\ve} \left\langle \ve Z + \frac{a}{b_0} JX ,X_j \right\rangle_\ve - \frac{\ve a}{b_\ve} \langle \nabla^\ve_{X_i} Z  ,X_j \rangle_\ve + \frac{b_0}{b_\ve} \langle \nabla_{X_i}^\ve JX  ,X_j \rangle_\ve.
\end{align*}
$$  \frac{\ve a^2}{2\ve b_\ve} \frac{\sqrt{\ve}}{b_\ve} (1- 2\ve \tau_1) + \frac{\sqrt{\ve}}{b_\ve} \frac{b_0^2}{\ve 2b_\ve} (1-2\ve \tau_1)$$

We see that
\begin{align*}
\nabla_X^\ve Z & = \textstyle \frac{1}{2\ve} \left( -JX + 2 \ve \tau_0 X + 2 \ve \tau_1 JX \right), \\
\nabla_{X_2}^\ve Z & =  \textstyle\frac{a}{2\ve}  \left( X - 2 \ve \tau_0 JX + 2\ve \tau_1 X \right), \\
\nabla_{X}^\ve JX & =  \textstyle J \nabla_X X + \frac{1}{2} \left(1 - 2 \ve \tau_1 \right) Z, \\
\nabla_{X_2}^\ve JX & =  \textstyle J \nabla_{X_2}X + a \ve \tau_0 Z  + \frac{1}{2\ve} b_0 X.
\end{align*}
Combining formulas, we get the result.
\end{proof}
An important observation from this lemma is that the expressions $\II^\ve_{11}$, $\II^\ve_{22}$ and $\II^\ve_{12}$ are all uniformly bounded on $\Sigma'$ for fixed $\ve >0$. The same hold for terms $\tau_0$ and $\tau_1$. This observation is a consequence of the fact that $\tau$ and $\II^\ve $ are tensors on the compact $\Sigma$ which have to be uniformly bounded relative to $g_\ve$. From this observation, we obtain the next important corollary.
\begin{corollary} \label{cor:Bounded}
The following expressions are uniformly bounded on $\Sigma'$,
$$\frac{Xa}{b_0} , \qquad \frac{X_2a}{b_0}, \qquad b_0 \langle \nabla_X X, JX \rangle, \qquad b_0 \langle \nabla_{X_2} X, JX \rangle.$$
\end{corollary}

\begin{remark}
Observe that \eqref{XaIdentity} can be reproduced from the formula of $\II_{12}^\ve$. We note that \eqref{XaIdentity} can we rewritten as
\begin{align*}
\frac{Xa}{\sqrt{1 -a^2}}  & = a^2  + \frac{b_0}{a} \langle \nabla_{X_2}^h X ,  X_2 \rangle \\
& \qquad + \frac{b_0}{a} \left\langle  - \langle \frac{1}{2} JX_2 -  \tau X_2 , X\rangle Z + \frac{1}{2} b_0 JX , X_2 \right\rangle -b_0^2 \tau_1. \\
& = a^2  + \frac{b_0}{a} \dv(X) + \frac{b_0}{a} \left( a b_0 (\frac{1}{2} + \tau_1)  + \frac{1}{2} b_0 a  \right) -b_0 \tau_1.\\
& = 1  + \frac{b_0}{a} \dv(X),
\end{align*}
reproducing the observation for the divergence of $X$ in Remark~\ref{re:Characteristic}. 
\end{remark}

\section{Proof of the Gauss-Bonnet formula} \label{sec:ProofGB}
\subsection{The primitive of the curvature integral}
Let $X$ be the orthonormal basis of $E \cap T\Sigma'$ as defined in Section~\ref{sec:Xvector}, with $X_2 = b_0 Z + a JX$. Recall definitions of $\tau_0$ and $\tau_1$ 
We isolate the order for the different terms that will appear in our integral $\int_{\Sigma} K^\ve \sigma^\ve$ for the Gauss-Bonnet theorem. 
\begin{lemma} We have expansion in terms of $\ve$ as
\begin{equation} \label{KB}
  K^\ve \sigma^\ve = \textstyle \frac{\sqrt{\ve}}{b_\ve} \left( \frac{1}{\ve} B_{1,-1} + B_{1,0}  \right) + \left( \frac{\sqrt{\ve}}{b_\ve} \right)^3  \left( \frac{1}{\ve} B_{2,-1}+ B_{2,0}  \right) \\
\end{equation}
with uniformly bounded terms
$$\textstyle B_{1,-1}= \frac{Xa}{b_0} -a^2 , \qquad B_{2,-1} = \langle \nabla_X X  ,J X \rangle X_2 a -  \frac{(Xa)^2}{b_0^2}, \qquad B_{2,0} =a \tau_0 \frac{X_2 a}{b_0}  $$
\begin{align*}
B_{1,0} & = \textstyle \Sec^1 + a^2 \left(    1 -  \tau_0^2     \right)  -  \left( \tau_1 - \frac{1}{2} \right)^2- b_0 a \tau_0 \langle \nabla_X X  ,J X \rangle -2 \tau_1 \frac{Xa}{b_0} ,
\end{align*}
\end{lemma}
\begin{proof}
We consider the Gaussian curvature using the Gauss equation \eqref{GaussEq}. We first have the expansion
\begin{align*}
& \frac{b_\ve^2}{\ve} \Sec^\ve = \langle R^\ve(X, X_2) X_2, X \rangle_\ve \\
&=  a^2 \langle R^\ve(X, JX) JX, X \rangle_\ve + b_0^2 \langle R^\ve(X, Z) Z, X \rangle_\ve -  2ab_0 \langle R^\ve(X, JX) X, Z \rangle_\ve \\
& = a^2 \left(  \langle R^1(X, JX) JX, X \rangle - \frac{3}{4\ve} + \ve (\tau_0^2 +\tau_1^2)  \right)  \\
& \qquad + b_0^2 \left(\frac{1}{4\ve^2} - \tau_0^2 - \tau_1^2 - \langle (\nabla_Z \tau) X, X \rangle - \frac{1}{\ve} \tau_1  \right) \\
& \qquad - 2a b_0 \left( \langle (\nabla_{JX}\tau) X, X \rangle - \langle (\nabla_X \tau ) JX, X \rangle \right) \\
& = \Sec^1 + a^2 \left(    \frac{3}{4} -  (\tau_0^2 +\tau_1^2)  \right)  - \frac{ b_0^2}{4} + b_0^2 \tau_1   + a^2 \left(   - \frac{3}{4\ve} + \ve (\tau_0^2 +\tau_1^2)  \right)  + \frac{b_0^2}{4\ve^2} - \frac{b_0^2}{\ve} \tau_1 \\
& = \Sec^1 + a^2 \left(  - \frac{1}{\ve} +  \frac{3}{4} -  (\tau_0^2 +\tau_1^2) +\tau_1  + \ve \left( \frac{1}{4} + \tau_0^2 +\tau_1^2 -\tau_1 \right)    \right)  \\
& \qquad + b_\ve^2 \left( \frac{1}{4\ve^2}  - \frac{1}{\ve} \tau_1 - \frac{1}{4}  + \tau_1   \right) .  \end{align*}
Writing out the identity $K^\ve \sigma^\ve = \frac{b_\ve}{\sqrt{\ve}} (\Sec^\ve + \II^\ve_{11} \II_{22}^\ve - (\II^\ve_{12})^2) \sigma$, and using Lemma~\ref{lemma:2ff},
\begin{align*}
\frac{b_\ve}{\sqrt{\ve}} \Sec^\ve 
 & = \frac{\sqrt{\ve}}{b_\ve} \left( \Sec^1 + a^2 \left(  - \frac{1}{\ve} +  \frac{3}{4} -  (\tau_0^2 +\tau_1^2) +\tau_1  + \ve \left( \frac{1}{4} + \tau_0^2 +\tau_1^2 -\tau_1 \right)    \right) \right)  \\
& \qquad + \frac{b_\ve}{\sqrt{\ve}} \left( \frac{1}{4\ve}  -  \tau_1 - \ve \left( \frac{1}{4}  - \tau_1 \right)   \right) , \\
\frac{b_\ve}{\sqrt{\ve}} \II_{11}^\ve \II_{22}^\ve & = - \frac{\sqrt{\ve}}{b_\ve}  \left( \ve a^2 \tau_0^2 + b_0 a \tau_0 \langle \nabla_X X  ,J X \rangle \right)  + \frac{\sqrt{\ve}}{b_\ve^3}  \left( \ve a \tau_0 \frac{X_2a}{b_0}  +  \langle \nabla_X X  ,J X \rangle X_2 a \right),
\end{align*}
\begin{align*}
-\frac{b_\ve}{\sqrt{\ve}} (\II_{12}^\ve)^2 & = - \frac{b_\ve}{\sqrt{\ve}}\left(\frac{\ve (Xa)^2}{b_0^2 b_\ve^4} + \frac{1-4 \ve \tau_1 + 4 \ve^2 \tau_1^2}{4\ve} - \frac{Xa(1-2\ve \tau_1)}{b_0 b_\ve^2} \right) \\
& = - \frac{\sqrt{\ve}}{b_\ve^3} \frac{(Xa)^2}{b_0^2} + \frac{\sqrt{\ve} }{b_\ve}\frac{Xa(1-2\ve \tau_1)}{\ve b_0}  - \frac{b_\ve}{\sqrt{\ve}} \frac{1-4 \ve \tau_1 + 4 \ve^2 \tau_1^2}{4\ve} .
\end{align*}
Adding all of these terms together, we have
\begin{align*}
& \frac{\sqrt{\ve}}{b_\ve} \left( \Sec^1 + a^2 \left( 1 -  \tau_0^2     \right) - a^2 \left( \tau_1- \frac{1}{2} \right)^2 - b_0 a \tau_0 \langle \nabla_X X  ,J X \rangle -2 \tau_1 \frac{Xa}{b_0}  \right) \\
 &  + \frac{\sqrt{\ve}}{b_\ve} \left(\frac{Xa}{\ve b_0}   - \frac{a^2}{\ve}   + a^2\ve \left( \tau_1 - \frac{1}{2} \right)^2    \right)   - b_\ve \sqrt{\ve}  \left( \tau_1 -  \frac{1}{2} \right)^2 \\
&  + \frac{\sqrt{\ve}}{b_\ve^3}  \left( \ve a \tau_0 \frac{X_2a}{b_0}  +  \langle \nabla_X X  ,J X \rangle X_2 a -  \frac{(Xa)^2}{b_0^2}  \right) 
\end{align*}
which equals our result. The terms are uniformly bounded by Corollary~\ref{cor:Bounded}.
\end{proof}

\subsection{Proof of Theorem~\ref{th:main}} \label{sec:ProofMain} Let us first state the following result related to limits of integrals that are singular at the limit.
\begin{lemma} \label{lemma:Integrals}
Let $f:[-1,1]\to \mathbb{R}$ be a continuous, uniformly bounded function and write $b_\ve = \sqrt{1-(1-\ve) a^2}$.
\begin{enumerate}[\rm (a)] 
\item The following limits of integrals vanish;
$$\textstyle \lim_{\ve\to 0} \sqrt{\ve} \int_{-1}^1 \frac{f}{b_\ve} da = 0 \quad\text{ and } \quad \lim_{\ve \to 0} \ve \int_{-1}^1\frac{f}{b_\ve^3} da =0.$$
\item We have $\lim_{\ve\to 0} \int_{-1}^1 \frac{f}{b_\ve} da = \int_{-1}^1 \frac{f}{b_0} d\sigma $, with the latter integral is well-defined and finite.
\item The limit $\lim_{\ve \to 0} \sqrt{\ve} \int_{-1}^1 \frac{f}{b_\ve^3} d\sigma$ exits and equals
$$\lim_{\ve \to 0} \sqrt{\ve} \int_{-1}^1 \frac{f}{b_\ve^3} d\sigma = f(-1) + f(1). $$
\item The limit $\lim_{\ve \to 0}\frac{1}{\sqrt{\ve}}\int_{-1}^1 f(a)(\frac{1}{b_\ve} - \frac{1}{b_0} ) da$ exists and equals
$$\lim_{\ve \to 0}\frac{1}{\sqrt{\ve}}\int_{-1}^1 f(a)(\frac{1}{b_\ve} - \frac{1}{b_0} ) da =- f(1) - f(-1).$$
\end{enumerate}
\end{lemma}
\begin{proof}
The statements for the integral $\int_{-1}^1 \frac{f}{b_\ve} da$ follow from the identity
$$\int_{-1}^1 \frac{f(a)}{\sqrt{1 + (\ve -1) a^2}} da = \sqrt{1-\ve}\int_{-\frac{\sin^{-1}\sqrt{1-\ve}}{\sqrt{1-\ve}}}^{\frac{\sin^{-1}\sqrt{1-\ve}}{\sqrt{1-\ve}}} f( \tfrac{1}{\sqrt{1-\ve}} \sin(u)) du,$$
which converge to $\int_{-\frac{\pi}{2}}^{\frac{\pi}{2}} f( \sin(u)) du$.
We similarly have
$$\int_{-1}^1\frac{f(a)}{(1+ (\ve -1) a^2)^{3/2}} da = \int_{- \ve^{-1/2}}^{\ve^{1/2}} f\left( \tfrac{u}{\sqrt{(1-\ve) u^2+1}} \right) du.$$
Obviously, this integral has upper bound for any $0 < k < \frac{1}{\sqrt{\ve}}$,
\begin{align*} & \int_{-k}^k  f\left( \tfrac{u}{\sqrt{1-(1-\ve) u^2}} \right) du + \left(\frac{1}{\sqrt{\ve}} - k \right) \left(\max_{-1\leq a \leq -\frac{k}{\sqrt{(1-\ve)k^2 +1}}} f(a) + \max_{\frac{k}{\sqrt{(1-\ve)k^2 +1}} \leq a \leq 1} f(a) \right) \\
& \leq \int_{-k}^k  f\left( \tfrac{u}{\sqrt{1-(1-\ve) u^2}} \right) du + \left(\frac{1}{\sqrt{\ve}} - k \right) \left(\max_{-1\leq a \leq -\frac{k}{\sqrt{k^2 +1}}} f(a) + \max_{\frac{k}{\sqrt{k^2 +1}} \leq a \leq 1} f(a) \right)   \end{align*}
and an analogous lower bounds bound involving the minimum. As the limit of both of these bounds vanish when multiplied with $\ve$ and letting $\ve \to 0$, we have the second part of (a). Multiplying with $\sqrt{\ve}$ and taking the limit, we obtain
\begin{align*} & \min_{-1\leq a \leq -\frac{k}{\sqrt{k^2 +1}}} f(a) + \min_{\frac{k}{\sqrt{k^2 +1}}  \leq a \leq 1} f(a) \\
&   \leq \lim_{\ve \to 0} \sqrt{\ve} \int_{-1}^1 \frac{f}{b_\ve^3} da \leq \max_{-1\leq a \leq -\frac{k}{\sqrt{k^2 +1}}} f(a) + \max_{\frac{k}{\sqrt{k^2 +1}} \leq a \leq 1} f(a)  \end{align*}
and letting $k \to \infty$, we obtain the result of (c).

Finally, for the result in (d), define for $-1 \leq c_1 < c_2 \leq 1$, the integral $I_\ve(c_1, c_2) = \int_{c_1}^{c_2} \left(\frac{1}{b_\ve} - \frac{1}{b_0} \right) da = F_\ve(c_2) - F_\ve(c_1)$, where $F_\ve(0) = 0$ and $F'_\ve(0) = \frac{1}{b_\ve} - \frac{1}{b_0}$. Then
$$F_\ve(c) = \frac{\sin^{-1}(\sqrt{1-\ve} c)}{\sqrt{1-\ve}} - \sin^{-1}(c),$$
and so
$$\lim_{\ve \to 0} \frac{1}{\sqrt{\ve}} F_\ve(c) =\left.  \frac{\partial}{\partial \sqrt{\ve}} \frac{\sin^{-1}(\sqrt{1-\ve} c)}{\sqrt{1-\ve}} \right|_{\ve =0}
= \left\{
\begin{array}{ll}
-1 & \text{if $c =  1$,} \\
1 & \text{if $c =  -1$,} \\ 0 & \text{otherwise.}
\end{array}
 \right..$$

Hence, for $-1 < c_1 < c_2 < 1$, we have
$$\lim_{\ve \to 0} \frac{I_\ve(1,c_2)}{\sqrt{\ve}} = -1, \qquad \lim_{\ve \to 0} \frac{I_\ve(c_1,c_2)}{\sqrt{\ve}} =0, \qquad \lim_{\ve \to 0} \frac{I_\ve(c_1,1)}{\sqrt{\ve}} =-1.$$ Next, since we have for any subinterval, we have
$$\left( \min_{c_1 \leq a \leq c_2} f(a)\right) I_\ve(c_1, c_2) \leq \int_{1-\rho}^1 f(a)(\frac{1}{b_\ve} - \frac{1}{b_0} ) da \leq \left( \max_{c_1 \leq a \leq c_2} f(a)\right) I_\ve(c_1, c_2),$$
it follows that
$$\lim_{\ve \to 0}\frac{1}{\sqrt{\ve}}\int_{1-\rho}^1 f(a)(\frac{1}{b_\ve} - \frac{1}{b_0} ) da =- f(1) - f(-1).$$
\end{proof}

\begin{proof}[Proof of Theorem~\ref{th:main}]
From the equation \eqref{KB}, define a function $A_{ij}: [-1,1] \to \mathbb{R}$, such that  $A_{i,j}(c) = \int_{0 \leq a \leq c} B_{i,j} \sigma$ for $c \geq 0$, and $A_{i,j}(c) = -\int_{c \leq a \leq 0} B_{i,j} \sigma$ for $c$ negative. Recall that each $B_{ij}$ are uniformly bounded on $\Sigma$ by Corollary~\ref{cor:Bounded}, so from Assumption \eqref{A}, each $A_{i,j}$ is differentiable in when $1 -|c| \in (0,c_1)$ for some $c_1<1$ with a bounded derivative. We look at limits of $\int_{\Sigma} K^\ve \sigma^\ve = 2\pi \chi(\Sigma)$. \\

\paragraph{\it Computations of terms of order $-1/2$} Since $\lim_{\ve \to 0} \sqrt{\ve} \int_{\Sigma} K^\ve \sigma^\ve =0$, then
$$\lim_{\ve \to 0} \sqrt{\ve} \int_{|a| < c_1} K^\ve \sigma^\ve = \int_{|a| < c_1} \frac{1}{b_0} B_{1,-1} \sigma$$
and from Lemma~\ref{lemma:Integrals} and its proof
\begin{align*} & \lim_{\ve \to 0} \sqrt{\ve} \int_{c_1 \leq a \leq 1} K^\ve \sigma^\ve  \\
& = \lim_{\ve \to 0} \int_{c_1 \leq |a| \leq  1} \frac{\ve}{b_\ve} \left( \frac{1}{\ve} A_{1,-1}'  + A_{1,0}'  \right)  da  - \lim_{\ve \to 0} \int_{c_1 \leq |a| \leq 1} \frac{\ve}{b_\ve^3} \left( A_{2,0}'+ A_{2,1}' \ve   \right)  da \\
&  =  \int_{c_1 \leq a \leq 1} \frac{1}{b_0}  A_{1,-1}'    da = \int_{c_1 \leq a \leq 1} \frac{B_{1,-1}}{b_0} \sigma.
\end{align*}
and similarly $ \lim_{\ve \to 0} \sqrt{\ve} \int_{-1 \leq a \leq -c_1} K^\ve \sigma^\ve = \int_{c_1 \leq a \leq 1} \frac{B_{1,-1}}{b_0} \sigma$. In conclusion,
$$\lim_{\ve \to 0} \sqrt{\ve} \int_{\Sigma} K^\ve \sigma^\ve = \int_{\Sigma} \frac{B_{1,-1}}{b_0} \sigma =0.$$

\paragraph{\it Computations of terms of order $0$} Using our computations for order $-1/2$, we have that
\begin{align*} &  \int_\Sigma K^\ve \sigma^\ve = \int_\Sigma K^\ve \sigma^\ve - \int_{\Sigma} \frac{B_{-1,1}}{b_0} \sigma \\
& = \int_{\Sigma}  \left( \left(  \frac{1}{\sqrt{\ve}} B_{1,-1} \left(\frac{1}{b_\ve} - \frac{1}{b_0} \right) +\frac{\sqrt{\ve}}{b_\ve}B_{1,0}   \right) + \frac{\sqrt{\ve}}{b_\ve^3} ( B_{2,-1} +B_{2,0} \ve   ) \right) \sigma \\
& = \int_{-1}^1  \left( \left(  \frac{1}{\sqrt{\ve}} A_{1,-1}' \left(\frac{1}{b_\ve} - \frac{1}{b_0} \right) +\frac{\sqrt{\ve}}{b_\ve}A_{1,0}'   \right) + \frac{\sqrt{\ve}}{b_\ve^3} ( A_{2,-1}' +A_{2,0}' \ve   ) \right) da
\end{align*}
and using Lemma~\ref{lemma:Integrals}, we finally have
$$\lim_{\ve \to 0}  \int_\Sigma K^\ve \sigma^\ve = 2\pi \chi(\Sigma) =A_{2,-1}'(1) - A_{1,-1}'(1) + A_{2,-1}'(-1) - A_{1,-1}'(-1)  .$$

For the final part,
\begin{align*}
B_{1,-1} & =  1- a^2 + \frac{\sqrt{1-a^2}}{a} \dv(X), \\
B_{2,-1} & =  \langle \nabla_X X, X_2 \rangle X_2 a - \frac{(Xa)^2}{b_0^2} = -  \langle  X, \nabla_X X_2 \rangle X_2 a - \frac{(Xa)^2}{b_0^2} \\
& = -  \langle  X, \nabla_X^h X_2 \rangle X_2 a +  \langle  X, b_0 \tau X \rangle X_2 a - \frac{(Xa)^2}{b_0^2} \\
& =  \left(-  \dv(X_2)  +  b_0 \tau_0 \right) X_2 a - \left(  1+ \frac{\sqrt{1-a^2}}{a} \dv(X) \right)^2 \\
\end{align*}
We notice that $K_{\Sigma, E} = B_{2,-1} - B_{1,-1}$ for the result.
\end{proof}

\begin{remark} \label{re:vanish}
We observe as a corollary of the proof of Theorem~\ref{th:main}, since the terms of order $-\frac{1}{2}$ in $\ve$, we will have
$$\int_{\Sigma} \frac{B_{1,-1}}{b_0} \sigma = \int_{\Sigma} \left( \frac{1}{1-a^2 }- \frac{a^2}{\sqrt{1-a^2}} \right) \sigma =\int_{\Sigma} \left(\sqrt{1-a^2} -  \frac{1}{a} \dv(X) \right) \sigma =0.$$
\end{remark}

\section{Analysis of surfaces with boundary} \label{sec:Boundary}
For the final section, we will consider manifolds with boundary. Let $(M, E,g)$ be a sub-Riemannian contact manifold. We will assume that we have a compact $C^2$-surface $\Sigma \subset M$ with a boundary $\partial\Sigma$ that is piecewise $C^2$. Let $h$ be the induced from the metric $g = g_1$. We parametrize $\partial \Sigma$ by a piecewise $C^2$ curve $\gamma:[0,\ell] \to \partial\Sigma$, parametrized by $h$-arc length and positively oriented. Relative to the restriction of the contact form $\beta = \alpha|_{T\Sigma}$, then
$$W = \{ t \in [0,\ell] \, : \, \beta(\dot \gamma(t)) \neq 0\}.$$
\begin{enumerate}[$\bullet$]
\item We say that $t_1 \in W^+$ if $t_1$ is a left limit point of $W$ such that $\beta(\dot \gamma(t_1+)) =0$.
\item We say that $t_2 \in W^-$ if $t_2$ is a right limit point of $W$ such that $\beta(\dot \gamma(t_2-)) =0$.
\end{enumerate}
Note that $W^+$ and $W^-$ are not necessarily disjoint. We will assume that the following holds.
\begin{equation}
\tag{B} \label{Star} \begin{array}{c} \text{If $t_1 \in W^+$ (resp. $t_2 \in W^-$) then} \\
\text{ $\frac{d}{dt} \beta(\dot \gamma)(t_1+) \neq 0$ (resp. $\frac{d}{dt} \beta(\dot \gamma)(t_2 -) \neq 0$).} \end{array}
\end{equation}
The assumption \eqref{Star} has the following geometric interpretation: If the boundary transisions from not being tangent to $E$ to being tangent to $E$, this has to either be an isolated point tangent to $E$ or it must happen at a point where the boundary fails to be $C^2$, see Remark~\ref{re:Star} for more information. Let $X$ and $X_2$ be as in Section~\ref{sec:Xvector}. For any point outside of $\cha(\Sigma)$, define $k_{E}^\ve(y)$ as the $h_\ve$ geodesic curvature of the leaf of $T\Sigma \cap E$ with respect to
$h_\ve$ at $y$, oriented in the direction of~$X$. For $\gamma(t) \not \in \cha(\Sigma)$, write
\begin{equation} \label{angle} \dot \gamma(t) = \cos \theta(t) X + \sin \theta(t) X_2. \end{equation}
Let $S = \{ y_1, \dots, y_N\}$ be the set of points where $\partial \Sigma$ fail to be $C^2$, each with exterior angles $\phi_1^\ve, \dots, \phi_N^\ve$ with respect to $h_\ve$. Write $\phi_j^1 = \phi_j$. We define $S = S_2 \cup S_1 \cup S_0$, where $S_n$ contains the points $y_i = \gamma(c)$ satisfying that precisely $n$ of the vectors $\dot \gamma(c-)$ and $\dot \gamma(c+)$ are in $E_{y_i} \cap T_{y_i} \Sigma$. Define $k_g^\ve$ be the signed geodesic curvature of $\partial \Sigma$ with respect to $h_\ve$ with $k_g^1 = k_g$.

\begin{theorem}[Sub-Riemannian Gauss-Bonnet theorem with boundary] \label{thm:GBBoundary}
For points $y = \gamma(t) \in \partial \Sigma$ where the below functions make sense, define
\begin{align*}
p^{\pm}(y) &= \sign(\langle X, \dot \gamma(t\pm) \rangle), & q^\pm(y) &= \sign(\langle X_2, \dot \gamma(t\pm) \rangle).
\end{align*}
Write the limit if geodesic curvatures as $\lim_{\ve \downarrow}k^\ve_E = k_E^0$ and introduce furthermore
$$\hat W^{\pm} = \{ t \in W^\pm \, :  \gamma(t) \in S, \gamma(t) \not \in \cha(\Sigma) \}.$$
If \eqref{A} and \eqref{Star} hold, and with $\theta(t)$ as in \eqref{angle},
\begin{align*}
2\pi \chi(\Sigma) &=  \left. \frac{d}{dc} \right|_{c=0} \int_{|a| \geq 1-c} K_{\Sigma, E} \sigma  + \int_{\partial \Sigma \cap \cha(\Sigma)} k_g(s) ds + \sum_{y_i \in S_2} \phi_i \\
& \quad  + \frac{\pi}{2} \sum_{y_i \in S_1} \sign(\phi_i)  +
\sum_{y_i \in S_0}
\frac{\pi}{2} (1- q^+(y_i) q^-(y_i)) \sign(\phi_i) \\
& \quad +  \frac{\pi}{2}  \sum_{y \in \gamma(\hat W^+)} p^+(y) q^+(y) - \frac{\pi}{2}  \sum_{y \in \gamma(\hat W^-)} p^-(y) q^-(y)  \\
& \quad +  \frac{\pi}{2}  \sum_{t \in \hat W^+} \frac{ k_E^0(\gamma(t+)) }{\dot \theta(t+)}q^+(\gamma(t)) - \frac{\pi}{2}  \sum_{t \in \hat W^-}  \frac{ k_E^0(\gamma(t-)) }{\dot \theta(t-)} q^-(\gamma(t)).
 \end{align*}
\end{theorem}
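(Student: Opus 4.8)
The plan is to run the Riemannian Gauss-Bonnet theorem with boundary for each $h_\ve$ and take the limit $\ve \downarrow 0$, splitting the boundary integral and the vertex contributions according to where $\partial\Sigma$ is or is not tangent to $E$. For fixed $\ve > 0$ we have
\[
2\pi \chi(\Sigma) = \int_\Sigma K^\ve \, d\sigma^\ve + \int_0^\ell k_g^\ve(t)\, dt + \sum_{i=1}^N \beta_i^\ve .
\]
The interior term $\int_\Sigma K^\ve\, d\sigma^\ve$ was handled in the proof of Theorem~\ref{th:main}: its limit is $\left.\frac{d}{dc}\right|_{c=0}\int_{|a|\ge 1-c} K_{\Sigma,E}\, d\sigma$, and that analysis goes through verbatim here since it is local away from $\partial\Sigma$ and the boundary contributes no extra characteristic-set concentration. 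So the real work is to compute $\lim_{\ve\downarrow 0}\big(\int_0^\ell k_g^\ve\, dt + \sum_i \beta_i^\ve\big)$ and identify it with the remaining terms in the statement.

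First I would study the geodesic curvature term. On the open set where $\gamma(t)\notin\cha(\Sigma)$ we use the frame $X, \hat X_2^\ve$ and write $\dot\gamma = \cos\theta^\ve\, X + \sin\theta^\ve\, \hat X_2^\ve$ (a rescaling of \eqref{angle}); then $k_g^\ve = \dot\theta^\ve + (\text{connection terms along }\gamma)$. Because $\|\hat X_2^\ve\|\to\infty$ relative to $X$ outside $\cha(\Sigma)$, on the part of $\partial\Sigma$ transverse to $E$ (i.e. $t\in W$) the tangent direction $\dot\gamma$ is forced, in the $h_\ve$-normalized frame, to rotate toward $\pm X$, so $\theta^\ve(t)\to 0$ or $\pi$ depending on $p^\pm$; on the part tangent to $E$ (where $\alpha(\dot\gamma)=0$, i.e. $\gamma$ runs along a leaf of $T\Sigma\cap E$) the curve is, to leading order, a reparametrized leaf and $\int k_g^\ve \to \int_{\partial\Sigma\cap\cha(\Sigma)} k_g\, ds$ by an argument parallel to the $B_{2,0}$ computation (the divergent $1/\ve$ and $1/\sqrt{\ve}$ pieces integrate to telescoping boundary values that cancel, exactly as in Section~\ref{sec:ProofMain}). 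The assumption \eqref{Star} is what guarantees that each transition between these two regimes is either isolated or occurs at a non-$C^2$ vertex, so there are only finitely many "turning events," each contributing a finite jump of $\pm\frac{\pi}{2}$ to the total turning; bookkeeping these jumps with the signs $p^\pm, q^\pm$ and the sign of $\dot\theta$ produces the $\gamma(\hat W^\pm)$ sums and the $k_E^0/\dot\theta$ sums (the latter being the first-order correction when the leaf itself has geodesic curvature at the transition point).

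Next I would handle the exterior angles $\beta_i^\ve$, case by case on $S = S_2\cup S_1\cup S_0$. For $y_i\in S_2$ both one-sided tangents lie in $E\cap T\Sigma$, both get the same $h_\ve$-rescaling, so $\beta_i^\ve\to\beta_i$ with no change. For $y_i\in S_0$ neither tangent is in $E$: after rescaling, each of $\dot\gamma(c\pm)$ collapses onto $\pm X$ according to the signs of $\langle X,\dot\gamma(c\pm)\rangle$, so the limiting exterior angle is $0$ if $\dot\gamma(c-)$ and $\dot\gamma(c+)$ collapse to the same side (i.e. $q^+q^-=1$ after accounting for which side, matching the $(1-q^+q^-)$ factor up to the sign convention $\sign(\beta_i)$) and $\pi$ if to opposite sides. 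For $y_i\in S_1$ exactly one tangent is in $E$ and one is not: the non-horizontal one collapses onto $\pm X$ while the horizontal one keeps a genuine direction, and the two become orthogonal in the limit, giving $\beta_i^\ve\to \pm\frac{\pi}{2}$ with sign $\sign(\beta_i)$. Collecting all of this and adding it to the geodesic-curvature limit gives the stated formula.

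The main obstacle I expect is the $S_1$ / $\hat W^\pm$ analysis: disentangling, at a point where $\partial\Sigma$ simultaneously fails to be $C^2$ \emph{and} transitions into tangency with $E$, which part of the $\pm\frac{\pi}{2}$ limiting angle is "really" an exterior-angle contribution ($S_1$ term) versus a turning contribution of the smooth arc as it is pinned to $\pm X$ (the $\gamma(\hat W^\pm)$ and $k_E^0/\dot\theta$ terms), and making the signs consistent. This requires a careful uniform estimate, using \eqref{Star} to get the rate at which $\alpha(\dot\gamma)$ vanishes, of $\theta^\ve$ near such a point — essentially a matched-asymptotics computation showing the accumulated turning of $\theta^\ve$ over a shrinking neighborhood of the transition converges, and pinning down its limit in terms of $\dot\theta(c\pm)$ and $k_E^0(\gamma(c\pm))$. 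Once that local model is established, the global statement follows by partitioning $[0,\ell]$ into finitely many arcs of the two types plus finitely many transition neighborhoods and summing.
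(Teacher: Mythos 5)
Your high-level strategy (apply the Riemannian Gauss--Bonnet theorem for each $h_\ve$ and take $\ve \downarrow 0$, splitting the boundary by tangency to $E$ and treating corners case by case) is the same as the paper's, and your corner analysis for $S_0$, $S_1$, $S_2$ matches. However, there are two genuine gaps. First, your claim that the interior term is handled ``verbatim'' by the proof of Theorem~\ref{th:main} is false: that proof kills the divergent term $\frac{1}{\sqrt{\ve}}\int_\Sigma \frac{B_{1,-1}}{b_0}\,d\sigma$ precisely by invoking the \emph{closed} Gauss--Bonnet identity $\int_\Sigma K^\ve d\sigma^\ve = 2\pi\chi(\Sigma)$, which is unavailable here. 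For a surface with boundary, $\int_\Sigma K^\ve d\sigma^\ve$ may itself diverge like $1/\sqrt{\ve}$, and this divergence must cancel against divergent pieces of the boundary integral (the $\frac{1}{\sqrt{\ve}}\int_{T_2} b_0 k_\Sigma\,dt$ term from arcs tangent to $E$ outside $\cha(\Sigma)$, and the $\frac{1}{\sqrt{\ve}}$ and $\frac{\log\ve}{\sqrt{\ve}}$ terms from arcs transverse to $E$). The paper's argument is not local: it collects \emph{all} contributions into an expansion $C_0 + C_1/\sqrt{\ve} + C_2\log\ve/\sqrt{\ve} + o(1)$ and only then uses Gauss--Bonnet to force $C_1 = C_2 = 0$ and read off $C_0 = 2\pi\chi(\Sigma)$. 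Relatedly, you conflate the arcs in $\cha(\Sigma)$ (which contribute the $\ve$-independent term $\int_{\partial\Sigma\cap\cha(\Sigma)} k_g\,ds$, since $h_\ve$ restricted to $E\cap T\Sigma$ does not depend on $\ve$) with arcs merely tangent to $E$ outside $\cha(\Sigma)$ (which contribute only to the divergent coefficient $C_1$ and leave \emph{no} order-zero trace).

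Second, the heart of the theorem --- the $\hat W^\pm$ and $k_E^0/\dot\theta$ sums --- is only gestured at. The paper obtains these not as ``telescoping boundary values that cancel'' but as genuine order-zero contributions concentrating at transition points: writing $\psi = \alpha(\dot\gamma)$, one must show that
\[
\int_{c_1}^{c_2} \frac{\sqrt{\ve}\, f}{\ve + (1-\ve)\psi^2}\,dt \;=\; \frac{\pi}{2}\,\frac{f(c_1+)}{\frac{d|\psi|}{dt}(c_1+)} + o(1)
\]
on an interval where $\psi(c_1+)=0$, via an $\arctan$ estimate that requires $\dot\psi(c_1+)\neq 0$ --- this is exactly where hypothesis \eqref{Star} enters, and without it the integral can blow up (e.g.\ $\psi(t)=Ct^2$). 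One must also verify that the companion integrals $\frac{1}{\sqrt{\ve}}\int \frac{f\psi}{\ve+(1-\ve)\psi^2}\,dt$ produce only $\frac{1}{\sqrt{\ve}}$ and $\frac{\log\ve}{\sqrt{\ve}}$ terms with no order-zero part, so that they are absorbed into $C_1, C_2$. You correctly identify this matched-asymptotics step as the main obstacle, but since the entire content of the $\hat W^\pm$ terms rests on it, the proposal as written does not establish the formula.
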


\begin{proof}
Our goal will be to show that
$$\textstyle \int_{\Sigma} K^\ve d\sigma^\ve + \int_{\partial \Sigma} k^\ve_g(s^\ve) ds^\ve + \sum_{j=1}^N \phi_j^\ve = C_0 + \frac{C_1}{\sqrt{\ve}} + \frac{C_2 \log \ve}{\sqrt{\ve}} + o(1),$$
for some constants $C_0, C_1, C_2$. It then follows from the Gauss-Bonnet theorem that $C_1 = C_2 =0$ and that $C_0 = 2\pi \chi(\Sigma)$. The conclusion follows from proving that $C_0$ equals the expression in Theorem~\ref{thm:GBBoundary}. The proof will proceed in parts. We first compute geodesic curvature along $C^2$ components of the boundary, then by establishing the limiting behavior of the resulting integrals. Finally, there is an analysis of the limiting behavior of corners.

\subsection*{Geodesic curvature along $C^2$ components of the boundary}
We let $\gamma(t)$ be a parametrization of the boundary $\partial \Sigma$ by $h$-arc length defined on $[0,\ell]$. Write
$$\textstyle \gamma_\ve(s) = \gamma( \varphi_\ve(s) ), \qquad \frac{d}{ds} \varphi_\ve(s) = \frac{1}{\|\gamma(\varphi_\ve(s))\|_\ve}, \qquad \varphi_\ve(0) =0.$$
for its reparametrization by $h_\ve = g_\ve|_{\Sigma}$-arc length defined for $s \in [0, \ell^\ve]$ with $\ell^\ve = \varphi^{-1}_\ve(\ell)$. Let $I^\ve$ denote $\frac{\pi}{2}$ rotation on $T\Sigma$ in the positive direction with respect to $h_\ve$. Observe that for points outside $\cha(\Sigma)$, then for constansts $A_1$, $A_2$, $B_1$, $B_2$,
\begin{align*}
& \textstyle \left\langle A_1 X + A_2 X_2, I^\ve (B_1 X+ B_2 X_2) \right\rangle_\ve  \left\langle A_1 X + A_2 X_2,  B_1 \frac{\sqrt{\ve}}{b_\ve} X_2- B_2 \frac{b_\ve}{\sqrt{\ve}} X \right\rangle_\ve  \\
& \textstyle = \frac{b_\ve}{\sqrt{\ve}} (B_1 A_2 - A_1 B_2) = \frac{b_\ve}{\sqrt{\ve}} \langle A_1 X + A_2 X_2, I^1(B_1 X + B_2 X_2) \rangle.
\end{align*}
For any $s = s^\ve \in [0, \ell^\ve]$, the $h_\ve$-geodesic curvature of $\gamma_\ve$ at $s$ equals $k^\ve_g(s) = \langle D_s^\ve \dot \gamma_{\ve}, I^\ve \dot \gamma_\ve \rangle_\ve$ where $D_t^\ve$ is the covariation derivative with respect to $\nabla^\ve$ along the curve $\gamma(t)$. If $ds^\ve$ denotes the increment with respect to $h_\ve$-arc length, then we are interested in computing the integral
\begin{align*}
\int_{\partial \Sigma} k^\ve_g(s^\ve) ds^\ve & = \int_0^\ell \frac{1}{\|\dot \gamma\|_\ve^2}  \langle  D_t^\ve \dot \gamma , I^\ve \dot \gamma \rangle_\ve \, dt = \int_0^\ell \frac{b_\ve \sqrt{\ve}}{ \ve +(1-\ve)b_0^2\sin^2 \theta }  \langle  D_t^\ve \dot \gamma , I^1 \dot \gamma \rangle \, dt .
\end{align*}
We want to find the part of this integral that has order zero with respect to $\ve$.

\subsection*{Decomposition into subintervals}
Write $[0,\ell] = T = T_0 \cup T_1 \cup T_2 \cup T_3$ where
\begin{align*}
T_0 & = \{ t \in T \, : \, \gamma(t)  \in S\}, \\
T_1 & = \{ t \in T\setminus T_0 \, : \, \gamma(t) \in \cha(\Sigma), \dot \gamma(t) \text{ defined}\} , \\
T_2 & = \{ t \in T\setminus T_0 \, : \, \gamma(t)  \not \in \cha(\Sigma), \dot \gamma(t) \in E_{\gamma(t)} \} = \{t \in T \, : \, \theta = 0\}, \\
T_3 & = \{ t \in T\setminus T_0 \, : \, \gamma(t)  \not \in \cha(\Sigma), \dot \gamma(t) \not \in E_{\gamma(t)} \} = \{t \in T \, : \, \theta \neq 0\}.
\end{align*}
In particular, $T_3$ is an open subset of $T$.  Since $T_0$ consists of isolated points, 
$$\textstyle \int_{\partial \Sigma} k^\ve(s^\ve) ds^\ve = \int_{s^{-1}(T_1 \cup T_2 \cup T_3)} k^\ve(s^\ve) ds^\ve.$$
Furthermore, on $T_1$, we have that $\int_{s^{-1}(T_1)} k^\ve(s^\ve) ds^\ve = \int_{s^{-1}(T_1)} k^1(s) ds$. We will thus only consider $T_2 \cup T_3$. 

Introduce the function $\psi(t) = \beta(\dot \gamma(t)) = b_0(\gamma(t)) \sin \theta(t)$, and note that
$$\textstyle \|\dot \gamma\|_{\ve}^2 = \cos^2 \theta + \frac{b_\ve^2}{\ve} \sin^2 \theta = 1 + (1-\ve) \frac{\psi^2}{\ve}.$$
Since $\gamma$ is piecewise $C^2$, it follows that $\psi$ is $C^1$ on each piecewise component where it is defined.

For $t \in T_2 \cup T_3$, we can then write $\dot \gamma(t)$ as in \eqref{angle}, meaning that $I^1 \dot \gamma(t) = \cos \theta X_2 - \sin \theta X$. Define $D_t$ as the covariant derivative with respect to the tangential connection on $\Sigma$ from $\nabla$. We then obtain
\begin{align*}
D_t^\ve \dot \gamma & =D_t \dot \gamma + \pr_{T\Sigma} \left( \langle Q_\ve \dot \gamma, \dot \gamma \rangle Z - \beta(\dot \gamma) \left(\frac{1}{\ve} Q_\ve  + \frac{1}{2\ve} J \right)\dot \gamma  \right)\\
& = D_t \dot \gamma -\ve  b_0 \langle \tau \dot \gamma, \dot \gamma \rangle X_2 - a b_0\frac{\sin \theta}{\ve} (- \sin \theta X + \cos \theta X_2)  +  b_0 \sin \theta  \tau \dot \gamma \\
& = D_t \dot \gamma +  \psi  \tau \dot \gamma -\ve  b_0 \langle \tau \dot \gamma, \dot \gamma \rangle X_2 - \frac{1}{\ve} a \psi I^1 \dot \gamma,
\end{align*}
leading to
\begin{align*}
\langle  D_t^\ve \dot \gamma(t) , I^1 \dot \gamma(t) \rangle 
& =  \langle D_t \dot \gamma +  \psi  \tau \dot \gamma, I^1 \gamma \rangle  -\ve b_0 \langle \tau \dot \gamma, \dot \gamma \rangle  \cos \theta - \frac{1}{\ve} a \psi \\
& =: k_\Sigma  - \ve b_0 \langle \tau \dot \gamma, \dot \gamma \rangle \cos \theta - \frac{1}{\ve} a\psi .
\end{align*}
We remark that for the special case when $\dot \gamma(t) = X|_{\gamma(t)}$, from the above formula $k^\ve_E$ has a well-defined limit as $\ve \to 0$.

For the integral over $T_2$, we have $\psi = 0$ and our integral becomes
\begin{align*}
\int_{s^{-1}(T_2)} k^\ve ds^\ve & = \int_{T_2} \frac{b_\ve}{\sqrt{\ve}} \left( k_\Sigma - \ve b_0 \langle \tau \dot \gamma, \dot \gamma \rangle \cos \theta \right) \, dt
 = \frac{1}{\sqrt{\ve}} \int_{T_2} b_0  k_\Sigma \, dt + O(\sqrt{\ve}). 
\end{align*}
For the interval $T_3$, we have
\begin{align} \label{T_3}
 \int_{s^{-1}(T_3)} k^\ve ds^\ve  & = \int_{T_3} \frac{\sqrt{\ve} b_\ve}{\ve + (1-\ve) \psi^2} \left( k_\Sigma - \ve b_0 \langle \tau \dot \gamma, \dot \gamma \rangle \cos \theta \right)  \, dt \\ \nonumber
& \qquad  - \frac{1}{\sqrt{\ve}} \int_{T_3} \frac{a b_\ve \psi }{\ve + (1-\ve) \psi^2} \, dt. \end{align}

\subsection*{Computations over $T_3$}
In order to find the integral of the geodesic curvature under $T_3$, we will first need to consider integrals of the form $\int_{ T_3} \frac{\sqrt{\ve} f}{\ve + (1-\ve) \psi^2} \, dt$ and $\frac{1}{\sqrt{\ve}}\int_{ T_3} \frac{f \psi }{\ve + (1-\ve) \psi^2} \, dt$ with $f$ being $C^1$. Consider three types of subsets $L_0$, $L_-$ and $L_+$ of $T_3$, with the following properties.
\begin{enumerate}[$\bullet$]
\item $L_0$ is an interval where $\psi$ is bounded away from zero.
\item $L_+ = (c_1, c_2)$ is an open interval, where $\psi$ is bounded away from zero on any interval $(c_1+\rho, c_2)$, $\rho >0$, but $\psi(c_1+) = 0$. Furthermore, we assume that $\dot \psi$ is bounded away from zero on $L_+$.
\item $L_- = (c_3, c_4)$ is an open interval, where $\psi$ is bounded away from zero on any interval $(c_3, c_4-\rho)$, $\rho >0$, but $\psi(c_4-) = 0$. Furthermore, we assume that $\dot \psi$ is bounded away from zero on $L_-$.
\end{enumerate}
By our assumption \eqref{Star}, we can decompose our set $T_3$ into a disjoint union intervals of the above type, where we have one interval of the type $L_+$ (resp. $L_-$) for every $c \in W^+$ (resp. $ c\in W^-$).

For an interval of the type $L_0$, we have
\begin{align*}
\int_{L_0} \frac{\sqrt{\ve} f}{\ve + (1-\ve) \psi^2} \, dt & = O(\sqrt{\ve}), \\
\frac{1}{\sqrt{\ve}}\int_{L_0} \frac{f \psi }{\ve + (1-\ve) \psi^2} \, dt &= \frac{1}{\sqrt{\ve}}\int_{L_0} \frac{f}{\psi} dt + O(\sqrt{\ve}).
\end{align*}
For $L_+ = (c_1,c_2)$, we will use that for any sufficiently small $\rho >0$, $(c_1,c_2) \setminus (c_1, c_1 + \rho)$ is an interval of the type $L_0$. We hence have that
\begin{align*}
& \int_{c_1}^{c_2} \frac{\sqrt{\ve} f}{\ve+ (1-\ve)\psi^2} \, dt = \int_{c_1}^{c_1+\rho} \frac{\sqrt{\ve} f}{\ve+ (1-\ve)\psi^2} \, dt + O(\sqrt{\ve}),
\end{align*}
and furthermore,
\begin{align*}
&  \frac{\left(\inf_{c_1< t\leq c_1+\rho} \frac{f(t)}{\dot \psi(t)} \right)}{\sqrt{1-\ve}} \left( \tan^{-1} \sqrt{\frac{1-\ve}{\ve}} \psi(c_1+\rho) \right) \\
& \leq \int_{c_1}^{c_1+\rho} \frac{\sqrt{\ve} f}{\ve+ (1-\ve)\psi^2} \, dt \leq \frac{\left(\sup_{c_1 < t\leq c_1+\rho} \frac{f(t)}{\dot \psi(t)} \right)}{\sqrt{1-\ve}} \left( \tan^{-1} \sqrt{\frac{1-\ve}{\ve}} \psi(c_1+\rho) \right).
\end{align*}
Taking a limit as $\ve \to 0$, we have that
\begin{align*}
& (\sign \psi(c_1 +\rho)) \frac{\pi}{2} \left(\inf_{c_1< t\leq c_1+\rho} \frac{f(t)}{\dot \psi(t)} \right)  \\
& \leq \lim_{\ve \downarrow 0} \int_{c_1}^{c_1+\rho} \frac{\sqrt{\ve} f}{\ve+ (1-\ve)\psi^2} \, dt \leq (\sign \psi(c_1 +\rho))\frac{\pi}{2} \left(\sup_{c_1 < t\leq c_1+\rho} \frac{f(t)}{\dot \psi(t)} \right).
\end{align*}
As this should be valid for any $\rho$, we can let $\rho \downarrow 0$ to obtain
\begin{align*}
& \int_{c_1}^{c_2} \frac{\sqrt{\ve} f}{\ve+ (1-\ve)\psi^2} \, dt = (\sign \psi(c_1 +))\frac{\pi}{2} \frac{f(c_1+)}{\dot \psi(c_1+)} + o(1) = \frac{\pi}{2} \frac{f(c_1+)}{\frac{d|\psi|}{dt}(c_1+)} +o(1).
\end{align*}
Similarly, using integration by parts, we find that
\begin{align*}
& \frac{1}{\sqrt{\ve}}\int_{c_1}^{c_2} \frac{f \psi }{\ve + (1-\ve) \psi^2} \, dt = \frac{1}{\sqrt{\ve}}\int_{c_1}^{c_1+\rho} \frac{f \psi }{\ve + (1-\ve) \psi^2} \, dt + \frac{1}{\sqrt{\ve}}\int_{c_1+\rho}^{c_2} \frac{f}{\psi} \, dt + O(\sqrt{\ve}) \\
&= \frac{\log(\ve + (1-\ve) \psi(c_1+\rho)^2)}{2\sqrt{\ve}(1-\ve)} \frac{f(c_1+\rho)}{\dot \psi(c_1+\rho)} - \frac{\log(\ve )}{2\sqrt{\ve}(1-\ve)} \frac{f(c_1+)}{\dot \psi(c_1 +)} \\
& \qquad - \frac{1}{2\sqrt{\ve}(1-\ve)} \int_{c_1}^{c_1+\rho}  \log(\ve + (1-\ve) \psi^2) \frac{d}{dt} \left( \frac{f}{\dot \psi} \right) \, dt \\
& \qquad  + \frac{\log| \psi(c_2)|}{\sqrt{\ve}} \frac{f(c_2)}{\dot \psi(c_2)} - \frac{\log| \psi(c_1+\rho)|}{\sqrt{\ve}} \frac{f(c_1+\rho)}{\dot \psi(c_1+\rho)} \\
& \qquad - \frac{1}{\sqrt{\ve}} \int_{c_1+\rho}^{c_2}  \log|\psi| \frac{d}{dt} \left( \frac{f}{\dot \psi} \right) \, dt + O(\sqrt{\ve}).
\end{align*}
Using that
\begin{itemize}
\item $\frac{\log(\ve + (1-\ve) \psi(c_1+\rho)^2)}{2\sqrt{\ve}(1-\ve)} \frac{f(c_1+\rho)}{\dot \psi(c_1+\rho)} - \frac{\log| \psi(c_1+\rho)|}{\sqrt{\ve}} \frac{f(c_1+\rho)}{\dot \psi(c_1+\rho)}  = O(\sqrt{\ve})$,
\item the integral $ \int_{c_1+\rho}^{c_2}  \log|\psi| \frac{d}{dt} \left( \frac{f}{\dot \psi} \right) \, dt$ is finite,
\item $\lim_{\rho \downarrow 0} \int_{c_1}^{c_1+\rho}  \log(\ve + (1-\ve) \psi^2) \frac{d}{dt} \left( \frac{f}{\dot \psi} \right) \, dt = 0$,
\end{itemize}
we obtain
\begin{align*}
& \frac{1}{\sqrt{\ve}}\int_{c_1}^{c_2} \frac{f \psi }{\ve + (1-\ve) \psi^2} \, dt  \\
&= - \frac{\log(\ve )}{\sqrt{\ve}} \frac{f(c_1+)}{2\dot \psi(c_1 +)}   + \frac{1}{\sqrt{\ve}} \left( \frac{f(c_2)}{\dot \psi(c_2)} \log| \psi(c_2)| -   \int_{c_1}^{c_2}  \log|\psi| \frac{d}{dt} \left( \frac{f}{\dot \psi} \right) \, dt \right) + o(1).
\end{align*}
In particular, there are no terms of order zero for such integrals.

If we do similar computations for $L_- = (c_3,c_4)$, we obtain
\begin{align*}
\int_{c_3}^{c_4} \frac{\sqrt{\ve} f}{\ve+ (1-\ve)\psi^2} \, dt & =  - \frac{\pi}{2} \frac{f(c_4-)}{\frac{d|\psi|}{dt}(c_4-)} +o(1).
\end{align*}
and
\begin{align*}
& \frac{1}{\sqrt{\ve}}\int_{c_3}^{c_4} \frac{f \psi }{\ve + (1-\ve) \psi^2} \, dt  \\
&=  \frac{\log(\ve )}{\sqrt{\ve}} \frac{f(c_4-)}{2\dot \psi(c_4-)}   + \frac{1}{\sqrt{\ve}} \left( -\frac{f(c_3)}{\dot \psi(c_3)} \log| \psi(c_3)| -   \int_{c_3}^{c_4}  \log|\psi| \frac{d}{dt} \left( \frac{f}{\dot \psi} \right) \, dt \right) + o(1).
\end{align*}

Summarizing these computation, we see that the only terms of degree zero in $\ve$ of \eqref{T_3} are,
\begin{align*}
& \frac{\pi}{2}  \sum_{c \in W^+} \frac{b_0(c+) k_{\Sigma}(c+) }{\frac{d|\psi|}{dt}(c+)} - \frac{\pi}{2}  \sum_{c \in W^-} \frac{b_0(c-) k_{\Sigma}(c-) }{\frac{d|\psi|}{dt}(c-)} \\
& = \frac{\pi}{2}  \sum_{c \in W^+ \setminus \gamma(\cha(\Sigma))} \frac{b_0(c+) k_{\Sigma}(c-) }{\frac{d|\psi|}{dt}(c-)} - \frac{\pi}{2}  \sum_{c \in W^- \setminus \gamma(\cha(\Sigma))} \frac{b_0(c-) k_{\Sigma}(c) }{\frac{d|\psi|}{dt}(c+)} \\
& = \frac{\pi}{2}  \sum_{c \in W^+ \setminus \gamma(\cha(\Sigma))} \frac{ k_{\Sigma}(c+) }{\frac{d|\sin \theta|}{dt} (c+)} - \frac{\pi}{2}  \sum_{c \in W^- \setminus \gamma(\cha(\Sigma))}  \frac{ k_{\Sigma}(-c) }{\frac{d|\sin \theta|}{dt} (c-)} \\
& = \frac{\pi}{2}  \sum_{c \in \hat W^+} \frac{ k_{\Sigma}(c+) }{\frac{d|\sin \theta|}{dt} (c+)} - \frac{\pi}{2}  \sum_{c \in \hat W^- }  \frac{ k_{\Sigma}(c-) }{\frac{d|\sin \theta|}{dt} (c-)} .
\end{align*}
In the last equality, we have used that from assumption \eqref{Star}, we have that if $c \in W^{\pm}$, then either $c \in W^+ \cap W^-$
or $c \in W^{\pm} \cap S$. We see that for $c \in W^{\pm}$,
\begin{align*}
\frac{d|\sin \theta|}{dt} (c\pm) & = \sign(\sin \theta(c\pm)) \cos(\theta(c\pm)) \dot \theta(c\pm) \\
& = \dot \theta(c \pm)  q^{\pm}(\gamma(c)) p^{\pm}(\gamma(c)),\\
k_\Sigma(c\pm)  &= \dot \theta(c\pm) + p^{\pm}(\gamma(c\pm)) k_E^0(\gamma(t)).
\end{align*}
In conclusion, we have
\begin{align*}
& \frac{\pi}{2}  \sum_{c \in \hat W^+} \frac{ k_{\Sigma}(c+) }{\frac{d|\sin \theta|}{dt} (c+)} - \frac{\pi}{2}  \sum_{c \in \hat W^- }  \frac{ k_{\Sigma}(c-) }{\frac{d|\sin \theta|}{dt} (c-)} \\
& =   \frac{\pi}{2}  \sum_{y \in \gamma(\hat W^+)} p^+(y) q^+(y) - \frac{\pi}{2}  \sum_{y \in \gamma(\hat W^-)} p^-(y) q^-(y)  \\
& \quad +  \frac{\pi}{2}  \sum_{c \in \hat W^+} \frac{ k_E^0(\gamma(c+)) }{\dot \theta(c+)}q^+(\gamma(c)) - \frac{\pi}{2}  \sum_{c \in \hat W^-}  \frac{ k_E^0(\gamma(c-)) }{\dot \theta(c-)} q^-(\gamma(c)).\end{align*}
This completes the computations on $T_3$.

\subsection*{Contributions from corners}
Finally, we consider elements of $S$. We observe that if $v,w \in T\Sigma$ with oriented angle $\phi_j^\ve$ relative to $h_\ve$, then
$$\lim_{\ve \to 0} |\phi^\ve| = \lim_{\ve \to 0} \cos^{-1} \frac{|\langle v, w \rangle_{h_\ve}|}{\|v\|_{h_\ve} \|w\|_{h_\ve}} = \left\{ \begin{array}{ll}
|\phi_j|, & v,w \in E \cap T\Sigma, \\
\frac{\pi}{2}, & v \in E \cap T\Sigma, w \not \in E \cap T\Sigma, \\
\frac{\pi}{2} (1-s), & v,w \not \in E \cap T\Sigma, s = \sign(\beta(v)\beta(w)).
\end{array} \right.$$

The result again follows by writing the Gauss-Bonnet formula with boundary for the $g_\ve$ metric and taking the limit $\ve \rightarrow 0$.
\end{proof}

\begin{figure}[h!]
\includegraphics[width=1.4in]{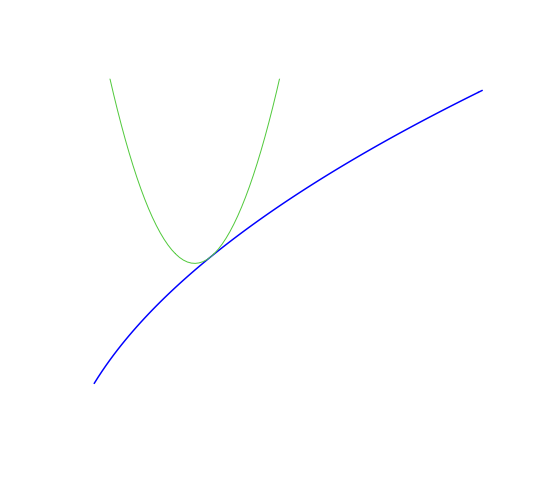}
\includegraphics[width=1.4in]{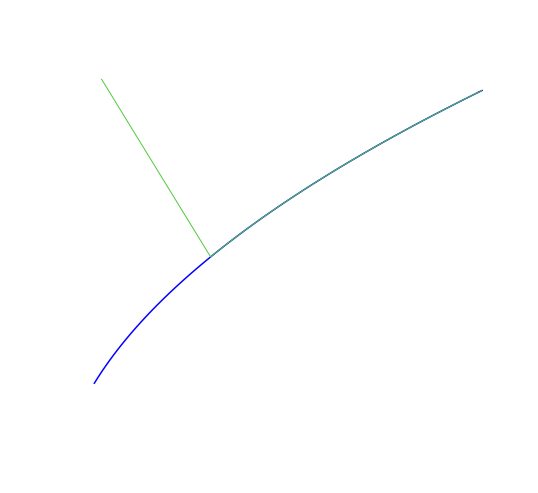}
\includegraphics[width=1.4in]{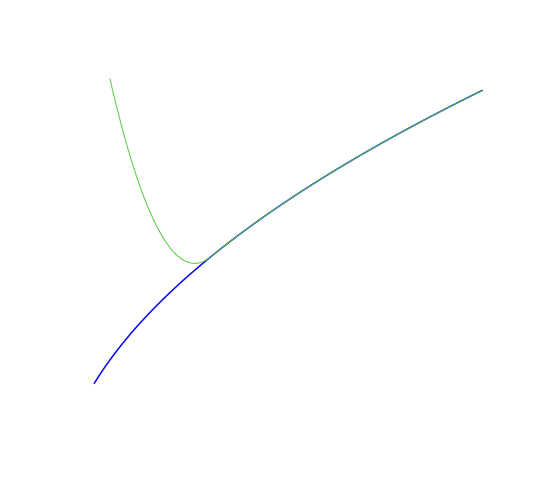}
\includegraphics[width=1.4in]{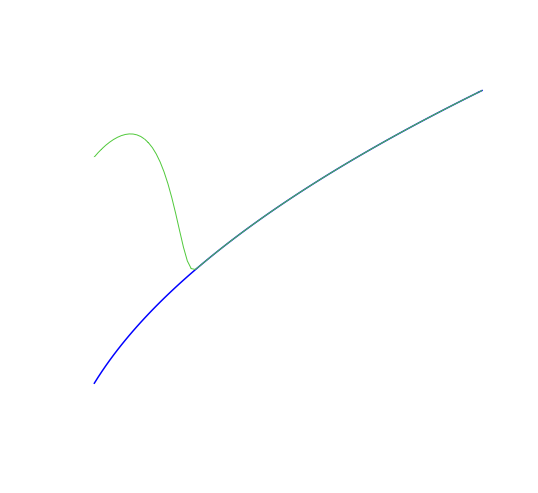}
\caption{The figure shows four cases where the boundary $\partial \Sigma$ in green intersects the characteristic foliation tangent to $E$ in blue. The three first are compatible with \eqref{Star}. In the first it is only tangent at an isolated point, and its second derivative does not follow the curve in blue. The second and the third represent respectively a $C^1$- and a $C^2$-singularity. In the fourth picture, the boundary smoothly becomes tangent to $E$ which is not compatible with \eqref{Star}.}
\end{figure}

\begin{remark} \label{re:Star}
If \eqref{Star} does not hold, these we can easily find examples of $\psi(t)$ such that the integral $\int_{c_1}^{c_1+\rho} \frac{\sqrt{\ve}}{\ve+\psi(t)^2} \, dt$ approach $\infty$ as $\ve \to \infty$, e.g. $\psi(t) = Ct^2$. For finding a Gauss-Bonnet formula in this case, one would need to establish exactly which part of the integral $\int_{c_1}^{c_2} \frac{\sqrt{\ve} \dot \theta b_\ve}{\ve  + (1-\ve) \psi^2}  dt$ has order $0$ relative to $\ve$ for any general~$\psi$. We leave this problem for future research, but conjecture that no such terms exist and therefore Theorem \ref{thm:GBBoundary} holds even when \eqref{Star} does not.
\end{remark}

\bibliography{Bibliography}

\end{document}